\documentclass[10pt,a4paper]{article}
\usepackage[latin1]{inputenc}
\usepackage{amsmath}
\usepackage{amsfonts}
\usepackage{amssymb,amsbsy,amsthm}

\usepackage{enumerate}
\author{Mohamed Khaled and Tarek Sayed Ahmed\\ Department of Mathematics, Faculty of Science,\\
Cairo University, Giza, Egypt.}
\title{The Andreka-Resek-Thompson and Ferenczi's results using games}
\date{}
\newtheorem{thm}{Theorem}[section]

\newtheorem{cl}{Claim}[section]

\newtheorem{lem}{Lemma}[section]
\newtheorem*{remark}{Remark}
\newtheorem{defn}{Definition}[section]
\newenvironment{athm}[1]{\par\noindent
{\bf #1 }. \slshape }
{\upshape\par}

\def\A{{\mathfrak{A}}}
\def\B{{\mathfrak{B}}}
\def\C{{\mathfrak{C}}}
\def\At{{\sf At}}
\def\d{{\sf d}}
\def\c{{\sf c}}
\def\s{{\sf s}}
\def\t{{\sf t}}
\def\At{{\sf At}}
\def\C{{\sf C}}
\def\D{{\sf D}}
\def\V{{\sf V}}
\def\S{{\sf S}}
\def\sA{{\sf A}}
\def\si{{i_0, i_1, \cdots, i_k}}
\def\sj{{j_0, j_1, \cdots, j_k}}
\def\tr{{[i_0|j_0]|[i_1|j_1]|\cdots |[i_k|j_k]}}
\def\tt{{(\t^{j_0}_{i_0}\cdots \t^{j_k}_{i_k})^{(\A)}}}
\def\ttr{{(\t^{j_0}_{i_0}\cdots \t^{j_k}_{i_k})^{(\mathfrak{Rd}_{pt}\A)}}}
\begin{document}
\maketitle
\begin{abstract}
We provide a new proof of the celebrated Andr\'{e}ka-Resek-Thompson representability result of certain finitely 
axiomtaized cylindric-like algebras, together with its quasipolyadic equality analogue proved 
by Ferenzci. Our proof uses games as introduced in algebraic logic by Hirsch and Hodkinson. 
Using the same method, we prove a new representability result for diagaonal free reducts 
of such algebras. Such representability results provides completeness theorems for variants of first order logic, 
that can also be viewed as multi-modal logics.
Finally, using a result of Marx, 
we show that all varieties considered enjoy the superamalgmation property, a strong form of amalgamation, implying that such logics also enjoy a Craig 
interpolation theorem.
\end{abstract}
\section{Introduction}
Stone's representation theorem for Boolean algebras can be formulated in two, essentially equivalent ways. 
Every Boolean algebra is isomorphic to a field of sets,
or the class of Boolean set algebras can be axiomatized by a finite set of equations. As is well known, Boolean algebras 
constitute the algebraic counterpart of propositional logic.
Stone's representation theorem, on the other hand, is the algebraic equivalent of the completeness theorem for propositional logic.

However, when we step inside the realm of first order logic, things tend to become more complicated. 
Not every abstract cylindric algebra is representable as a field of sets, where
the extra Boolean operations of cylindrifiers and diagonal elements are faithfully represented by projections and equality.
Disappointingly, the class of representable algebras fail to be axiomatized  by any reasonable finite schema and its resistance to such 
axiomatizations is inevitable. This is basically a reflection of the essential incompleteness of natural (more basic) infinitary 
extensions of first order logic. In such extensions, unlike first order logic, validity cannot be captured by a finite schema. 

Such extentions are obtained by dropping the condition of local finiteness 
(reflecting the simple fact that first order formulas contain only finitely many variables)
in algebras considered, 
allowing formulas of infinite length. This is necessary if we want to deal with the so-called algebriasable extensions of first order logic; 
extensions that are akin to universal algebraic investigations.

The condition of local finiteness, cannot be expressed in first order logic, 
and this is not warranted if we want to deal, like in the case of Boolean algebras, only with equations, 
or at worst quasi-equations.  Then we are faced with the following problem. Find a simple (hopefully finitary) axiomatization 
of classes of representable algebras abounding in algebraic logic, using only equations or quasi equations, 
which also means that we want to stay in the realm of quasivarieties.

There are two conflicting but complementary facets 
of such a problem, referred to in the literature, as the representation problem.
One is to delve deeply in investigating the complexity of potential axiomatizations for existing varieties 
of representable algebras, the other is to try to sidestep 
such wild unruly complex axiomatizations, often referred to as {\it taming methods}. 
Those taming methods can either involve passing to (better behaved) expansions of the algebras considered,
or else change the very  notion of representatiblity involved, as long as it remains concrete enough.
The borderlines are difficult to draw, we do might not know what is {\it not} concrete enough, but we can
 judge that a given representability notion is satisfactory, once we have one. 
(This is analogous to  undecidability issues, with the main difference that we do know what we mean 
by {\it not decidable}. We do not have an analogue of a 'recursive representability notion'). 

One of the taming methods is {\it relativization}, meaning that we search for representations on sets consisting of arbitrary $\alpha$ sequences($\alpha$ an ordinal specifying the dimension
of algebras considered), rather than squares, that is set of the form
$^{\alpha}U$ for some set $U$. It turns out that this can be done when do
not insist on commutativity of cylindrifications. Dropping
commutativity makes life much easier in many respects, not only
representability. An example is decidability of the equational theory
of the class of algebras in question. Typically given a set of equations $\Sigma$, show that
$\A\models \Sigma$ iff $\A$ is representable as an algebra whose
elements are genuine relations and operations are set theoretic operations pending only on manipulations of concrete relations.
Very few positive results are known in this regard, the most famous is the Resek - Thompson celebrated theorem proved in \cite{an}. The Resek - Thompson result is a refinement of a result of Resek due to Thompson. The first proof
of Resek's result (that is slightly different) was more than 100 pages long.
The short proof of the modified Resek - Thompson result in \cite{an} is due to Andr\'eka.
In this paper we provide also a relatively short  proof of this theorem using games as introduced in algebraic logic by Hirsch and Hodkinson \cite{hh}.

In  \cite{f}, Ferenczi talks about an important case for fields of sets that occur when the unit consists of a certain set of $\alpha$-sequences. In addition to the usual set theoretic boolean operations, the $i$th cylindrification and the constants $ij$th diagonal, new natural operations are imposed  to describe such field of sets. Such operations are e.g. the elementary substitution $[i|j]$ and the elementary transposition $[i,j]$ for every $i,j<\alpha$, restricted to the unit. Ferenczi considers the extended field of sets which is closed under these operations and then gives a positive answer to the question: Do these fields of sets form a variety, and if so, what is its axiomatization?\\ Again, in this paper we provide a shorter proof of Ferenczi's result, which provides a \textit{finite} axiomatization, using games as introduced in \cite{hh}. We follow the axiomatization provided by Ferenczi in his recent paper \cite{f}. We refer the reader to \cite{f} and \cite{an} to get a grasp of the importance of this problem in algebraic logic.

\par Building representations can be implemented by the step-by-step method (as in \cite{an} and \cite{f}), which consists of treating defects one by one and then taking a limit where the contradictions disappear. What can be done by step-by-step constructions, can be done by games but not the other way round. Games were introduced in algebraic logic by Hirsch and Hodkinson. Such games, which are basically Banach-Mazur games in disguise, are games of infinite lengths between two players $\forall$ and $\exists$. 
The real advantage of the game technique is that games do not only build representations, when we know that such representations exist, but they also tell us  when such representations exist, if we do not know a priori that they do. The translation however from step-by step techniques to games is not always a 
purely mechanical process, even if we know that it can be done. This transfer can well involve some ingenuity, in obtaining games are transparent, intuitive and easy to grasp. It is an unsettled (philosophical) question as to which is more intuitive, step-by-step techniques or games. Basically this depends on the context, but in all cases it is nice to have both available if possible, when we know one exists. When we have a step-by-step technique, then we are sure that there is 
at least one corresponding game. Choosing a simple game is what counts at the end.

\par We follow the notation and terminology of \cite{an} and \cite{f}. In particular, for relations $R,S$ $R|S=\{(a,b): \exists c[(a,c)\in R, (c,b)\in S]\}$.
\begin{defn}
\begin{description}

\item[Class $Crs_{\alpha}$]
An algebra $\A$ is a cylindric relativised set algebra of dimension $\alpha$ with unit $\V$ if it is of the form $$\langle\sA, \cup, \cap, \sim_{\V}, \phi, \V, \C_{i}^{\V}, \D_{ij}^{\V}\rangle_{i,j<\alpha},$$ where $\V$ is a set of $\alpha$-termed sequences such that $\sA$ is a non-empty set of subsets of $\V$, closed under the Boolean operations $\cup$, $\cap$, $\sim_{\V}$ and under the cylindrifications $\C^{\V}_{i}X=\{y\in\V:y^{i}_{u}\in X\text{ for some }u\}$, where $i<\alpha$, $X\in \sA$, and $\sA$ contain the elements $\phi$, $\V$ and the diagonals $\D^{\V}_{ij}=\{y\in\V:y_i=y_j\}$.
\item[Class $Drs_{\alpha}$]
An algebra $\A$ is a diagonal free relativised set algebra of dimension $\alpha$ with unit $\V$ if it is of the form $$\langle\sA, \cup, \cap, \sim_{\V}, \phi, \V, \C_{i}^{\V}\rangle_{i,j<\alpha},$$ where $\V$ is a set of $\alpha$-termed sequences such that $\sA$ is a non-empty set of subsets of $\V$, closed under the Boolean operations $\cup$, $\cap$, $\sim_{\V}$ and under the cylindrifications $\C^{\V}_{i}X=\{y\in\V:y^{i}_{u}\in X\text{ for some }u\}$, where $i<\alpha$, $X\in \sA$, and $\sA$ contain the elements $\phi$ and $\V$.
\end{description}
\end{defn}
The meaning of the notation $y^{i}_{u}$ is $(y^{i}_{u})_{j}=y_j$ if $j\not=i$ and $(y^{i}_{u})_j=u$ if $j=i$.
\begin{athm}{Some concepts and notation concerning $Crs_{\alpha}$ and $Drs_{\alpha}$:}
\begin{enumerate}[-]
\item The class $Crs_{\alpha}$ is a subclass of $Drs_{\alpha}$.
\item Let $\A\in Crs_{\alpha}$ be with unit element $\V$. The relativized substitution operator\footnote{Clearly $\sA$ is closed under ${^{\V}\S^{i}_{j}}$, since $\sA$ is closed under the cylindrifications and contains the diagonals.} ${^{\V}\S^{i}_{j}}$ is defined as $${^{\V}\S^{i}_{j}}X=\C^{\V}_{i}(\D^{\V}_{ij}\cap X) \text{ }\text{ } (X\in\sA).$$
We often omit the superscript $\V$ from ${^{\V}\S^{i}_{j}}$ and write $\S^{i}_{j}$.
\item The transformation $\tau$ defined on $\alpha$ is called finite if $\tau i=i$ except for finitely many $i<\alpha$. Important special cases of the finite transformations are the transformations $[i|j]$, called elementary substitutions, and $[i,j]$, called transpositions.
\item Let $\A\in Drs_{\alpha}$ be with unit element $\V$. If $y\in\V$ and $\tau$ is any finite transformation on $\alpha$, $^{\V}\S_{\tau}y$ is defined as $\tau|y$. If $X\in\sA$, then $^{\V}\S_{\tau}X$ is defined as $\{\tau|y:y\in X\}$\footnote{$\sA$ need not to be closed under $^{\V}\S_{\tau}$ for any arbitrary $\tau$.}.
\end{enumerate}
\end{athm}
\begin{defn}[Class $D_{\alpha}$]It is the subclass of $Crs_{\alpha}$ for which ${^{\V}\S^{i}_{j}}V=V$ for every $i, j < \alpha$, where $V$ is the unit of the algebra.
\end{defn}
If we consider the substitution operators ${^{\V}\S_{[i|j]}}$ and the transposition operators${^{\V}\S_{[i,j]}}$ defined above, then we get the polyadic versions of $Crs_{\alpha}$.
\begin{defn}
\begin{description}
\item[Class $Prs_{\alpha}$]
An algebra $$\mathfrak{B}=\langle\sA, \cup, \cap, \sim_{\V}, \phi, \V, \C^{\V}_{i}, {^{\V}\S_{[i|j]}}, {^{\V}\S_{[i,j]}}\rangle_{i,j<\alpha}$$
is an $\alpha$-dimensional polyadic relativized set algebra if for the diagonal free reduct, $\mathfrak{Rd}_{df}\mathfrak{B}\in Drs_{\alpha}$; further, $\sA$ is closed under the transpositions ${^{\V}\S_{[i,j]}}$ and the substitutions ${^{\V}\S_{[i|j]}}$.
\item[Class $Pers_{\alpha}$] An algebra $$\mathfrak{B}=\langle\sA, \cup, \cap, \sim_{\V}, \phi, \V, \C^{\V}_{i}, {^{\V}\S_{[i|j]}}, {^{\V}\S_{[i,j]}}, \D^{\V}_{ij}\rangle_{i,j<\alpha}$$
is an $\alpha$-dimensional polyadic equality relativized set algebra if for the cylindric reduct, $\mathfrak{Rd}_{ca}\mathfrak{B}\in Crs_{\alpha}$; further, $\sA$ is closed under the transpositions ${^{\V}\S_{[i,j]}}$\footnote{Clearly, $\sA$ is closed under the substitutions ${^{\V}\S_{[i|j]}}$ because ${^{\V}\S_{[i|j]}}={^{\V}\S^{i}_{j}}$.}.
\item[Class $Srs_{\alpha}$] An algebra $$\mathfrak{B}=\langle\sA, \cup, \cap, \sim_{\V}, \phi, \V, \C^{\V}_{i}, {^{\V}\S_{[i|j]}}\rangle_{i,j<\alpha}$$
is an $\alpha$-dimensional substitution relativized set algebra if for the diagonal free reduct, $\mathfrak{Rd}_{df}\mathfrak{B}\in Drs_{\alpha}$; further, $\sA$ is closed under the substitutions ${^{\V}\S_{[i|j]}}$.
\end{description}
\end{defn}
\begin{defn}[Class $Dp_{\alpha}$]It is the subclass of $Prs_{\alpha}$ for which ${^{\V}\S_{[i|j]}}V=V$ for every $i, j < \alpha$, where $V$ is the unit of the algebra.
\end{defn}
\begin{defn}[Class $Dpe_{\alpha}$]It is the subclass of $Pers_{\alpha}$ for which ${^{\V}\S_{[i|j]}}V=V$ for every $i, j < \alpha$, where $V$ is the unit of the algebra.
\end{defn}
\begin{defn}[Class $Ds_{\alpha}$]It is the subclass of $Srs_{\alpha}$ for which ${^{\V}\S_{[i|j]}}V=V$ for every $i, j < \alpha$, where $V$ is the unit of the algebra.
\end{defn}

\par We assume the knowledge of the concepts of cylindric algebras \cite{hmt1}. The cylindric axiom \begin{description}
\item[$(C_{4})$] $\c_i\c_j x=\c_j\c_i x$
\end{description}
proved to be quite a strong property. In the following axiomatization this property is replaced by a weaker property. Furthermore, what is called merry-go-round axioms ($\textbf{MGR}$) are postulated. By Resek-Thompson theorem \cite{an}, the existence of such axioms yields representability by relativized set algebra. The following axiomatization is due to Thompson and Andreka.
\begin{defn}[Class $PTA_{\alpha}$]
An algebra $\A=\langle A, +, \cdot, -, 0, 1, \c_{i}, \d_{ij}\rangle_{i,j\in\alpha}$, where $+$, $\cdot$ are binary operations, $-$, $\c_i$ are unary operations and $0$, $1$, $\d_{ij}$ are constants for every $i,j\in\alpha$, is partial transposition algebra\footnote{We call it partial transposition algebra to illustrate that MGR axiom gives partial transpositions. This class is different from the class of partial transposition algebras defined in \cite{ptaf}.} if it satisfies the following identities for every $i, j, k\in\alpha$.
\begin{description}
\item[$(C_{0})-(C_{3})$] $\langle A, +, \cdot, -, 0, 1, \c_i\rangle_{i\in\alpha}$ is a Boolean algebra with additive closure operators $\c_i$ such that the complements of $\c_i$-closed elements are $\c_i$-closed,
 \item[$(C_4)^*$] $\c_i\c_j x\geq \c_j\c_i x\cdot \d_{jk}$ if $k\notin\{i, j\}$,
\item[$(C_5)$]$\d_{ii}=1$,
\item[$(C_6)$]$\d_{ij}=\c_k(\d_{ik}\cdot \d_{kj})$ if $k\notin\{i, j\}$,
\item[$(C_7)$]$\d_{ij}\cdot\c_i(\d_{ij}\cdot x)\leq x$ if $i\not= j$,
\item[(MGR)]for every $i,j\in\alpha$, $i\not=j$, let $s^i_j x=\c_i(\d_{ij}\cdot x)$, $s^i_i x=x$. Then:
$$ {\sf s}_i^k{\sf s}_j^i{\sf s}_m^j{\sf s}_k^m{\sf c}_kx={\sf s}_m^k{\sf s}_i^m{\sf s}_j^i {\sf s}_k^j{\sf c}_kx\text{ if }k\notin \{i,j,m\}, m\notin \{i,j\}$$.
 \end{description}
\end{defn}
We also assume the basic knowledge of the concepts of polyadic equality and quasi-polyadic equality algebras \cite[p. 266]{hmt2}. The so called finitary polyadic equality algebras, i.e., the class $FPEA_{\alpha}$, is term defnitionally equivalent to the quasi-polyadic equality algebras \cite[Theorem 1]{st}. The axiomatization of $FPEA_{\alpha}$ and of the class $TEA_{\alpha}$ to be introduced are different in only one axiom, namely the axiom
\begin{description}
\item[($F_5$)] $\s^i_j\c_k x=\c_k\s^{i}_{j}x$ if $k\notin\{i,j\}$.
\end{description}
The following axiomatization is due to Ferenczi, abstracting
away from the class $Pers_{\alpha}$ (meaning that the axioms all hold in
$Pers_{\alpha}$). This
is a soundness condition. Ferenzci proves completeness of these
axioms, which we also prove using the different technique of resorting
to games.
\begin{defn}[Class $TEA_{\alpha}$]
A transposition equality algebra of dimension $\alpha$ is an algebra$$\A=\langle A, +, \cdot, -, 0, 1, \c_i, \s^{i}_{j}, \s_{ij}, \d_{ij}\rangle_{i,j\in\alpha},$$ where $\c_i, \s^{i}_{j}, \s_{ij}$ are unary operations, $\d_{ij}$ are constants, the axioms ($F_0$)-($F_9$) below are valid for every $i,j,k<\alpha$:
\begin{description}
\item[$(Fe_0)$] $\langle A, +, \cdot, -, 0, 1\rangle$ is a boolean algebra, $\s^i_i=\s_{ii}=\d_{ii}=Id\upharpoonright A$ and $\s_{ij}=\s_{ji}$,
\item[$(Fe_1)$] $x\leq\c_i x$,
\item[$(Fe_2)$] $\c_i(x+y)=\c_i x+\c_i y$,
\item[$(Fe_3)$] $\s^{i}_{j}\c_i x=\c_i x$,
\item[$(Fe_4)$] $\c_i\s^{i}_{j}x=\s^{i}_{j}x$, $i\not=j$,
\item[$(Fe_5)^*$] $\s^{i}_{j}\s^{k}_{m}x=\s^{k}_{m}\s^{i}_{j}x$ if $i,j\notin\{k,m\}$,
\item[$(Fe_6)$] $\s^{i}_{j}$ and $\s_{ij}$ are boolean endomorphisms,
\item[$(Fe_7)$] $\s_{ij}\s_{ij}x=x$,
\item[$(Fe_8)$] $\s_{ij}\s_{ik}x=\s_{jk}\s_{ij}x$, $i,j,k$ are distinct,
\item[$(Fe_9)$] $\s_{ij}\s^{i}_{j}x=\s^{j}_{i}x$,
\item[$(Fe_{10})$] $\s^{i}_{j}\d_{ij}=1$,
\item[$(Fe_{11})$] $x\cdot\d_{ij}\leq\s^{i}_{j}x$.
\end{description}
For $\A\in TA_{\alpha}$, its partial transposition reduct is the structure $$\mathfrak{Rd}_{pt}\A=\langle A, +, \cdot, -, 0, 1, \c_i, \s^{i}_{j}, \d_{ij}\rangle_{i,j\in\alpha}.$$
\end{defn}
The following axiomatization is new, it is obtained from Ferenczi's axiomatization by dropping equations
involving diagonal elements.
\begin{defn}[Class $TA_{\alpha}$]
A transposition algebra of dimension $\alpha$ is an algebra$$\A=\langle A, +, \cdot, -, 0, 1, \c_i, \s^{i}_{j}, \s_{ij}\rangle_{i,j\in\alpha},$$ where $\c_i, \s^{i}_{j}, \s_{ij}$ are unary operations, the axioms ($F_0$)-($F_9$) below are valid for every $i,j,k<\alpha$:
\begin{description}
\item[$(F_0)$] $\langle A, +, \cdot, -, 0, 1\rangle$ is a boolean algebra, $\s^i_i=\s_{ii}=\d_{ii}=Id\upharpoonright A$ and $\s_{ij}=\s_{ji}$,
\item[$(F_1)$] $x\leq\c_i x$,
\item[$(F_2)$] $\c_i(x+y)=\c_i x+\c_i y$,
\item[$(F_3)$] $\s^{i}_{j}\c_i x=\c_i x$,
\item[$(F_4)$] $\c_i\s^{i}_{j}x=\s^{i}_{j}x$, $i\not=j$,
\item[$(F_5)^*$] $\s^{i}_{j}\s^{k}_{m}x=\s^{k}_{m}\s^{i}_{j}x$ if $i,j\notin\{k,m\}$,
\item[$(F_6)$] $\s^{i}_{j}$ and $\s_{ij}$ are boolean endomorphisms,
\item[$(F_7)$] $\s_{ij}\s_{ij}x=x$,
\item[$(F_8)$] $\s_{ij}\s_{ik}x=\s_{jk}\s_{ij}x$, $i,j,k$ are distinct,
\item[$(F_9)$] $\s_{ij}\s^{i}_{j}x=\s^{j}_{i}x$.
\end{description}
\end{defn}

The following axiomatization is new. It is similar to Pinter's axiomatization \cite{pinter} with two major differences. We do not have commutativity of cylindrifications, this is one thing; the other
is that we stipulate the $MGR$ identities.

\begin{defn}[Class $SA_{\alpha}$]
A substitution algebra of dimension $\alpha$ is an algebra$$\A=\langle A, +, \cdot, -, 0, 1, \c_i, \s^{i}_{j}\rangle_{i,j\in\alpha},$$ where $\c_i, \s^{i}_{j}$ are unary operations, the axioms ($S_0$)-($F_8$) below are valid for every $i,j,k<\alpha$:
\begin{description}
\item[$(S_0)$] $\langle A, +, \cdot, -, 0, 1\rangle$ is a boolean algebra and $\s^i_i=Id\upharpoonright A$,
\item[$(S_1)$] $x\leq\c_i x$,
\item[$(S_2)$] $\c_i(x+y)=\c_i x+\c_i y$,
\item[$(S_3)$] $\s^{i}_{j}\c_i x=\c_i x$,
\item[$(S_4)$] $\c_i\s^{i}_{j}x=\s^{i}_{j}x$, $i\not=j$,
\item[$(S_5)^*$] $\s^{i}_{j}\s^{k}_{m}x=\s^{k}_{m}\s^{i}_{j}x$ if $i,j\notin\{k,m\}$,
\item[$(S_6)$] $\s^{i}_{j}$ is boolean endomorphism,
\item[$(S_7)$] $\s^{k}_{k}\s^j_k x=\s^k_i\s^j_i x$,
\item[$(S_8)$] $$ {\sf s}_i^k{\sf s}_j^i{\sf s}_m^j{\sf s}_k^m{\sf c}_kx={\sf s}_m^k{\sf s}_i^m{\sf s}_j^i {\sf s}_k^j{\sf c}_kx\text{ if }k\notin \{i,j,m\}, m\notin \{i,j\}$$.
\end{description}
\end{defn}

\par $(F_5)^*$ (and also $(F_5)^*$ and $(S_5)^*$) is obviously a weakening of $(F_5)$. Also it is known that $\mathfrak{Rd}_{pt}\A\in PTA_{\alpha}$, for any $\A\in TEA_{\alpha}$ \cite{st}. We consider as known the concept of the substitution operator $\s_{\tau}$ defined for any finite transformation $\tau$ on $\alpha$; $\s_{\tau}$ can be introduced uniquely in $FPEA_{\alpha}$ and in $TEA_{\alpha}$, too. The existence of such an $\s_{\tau}$ follows from the proof of \cite[Theorem 1(ii)]{st}, it is easy to check that the proof works by assuming $(F_5)^*$ instead of $(F_5)$ and (notationally) the composition operator $|$ instead of $\circ$.
\par Throughout this paper we assume that the polyadic-like algebras occurring here are equipped with the operator $\s_{\tau}$, where $\tau$ is finite. Further, $\s_{\tau}$ is assumed to have the following properties for arbitrary finite transformations $\tau$ and $\lambda$ and ordinals $i,j<\alpha$ (by \cite[p.542]{st}):
\begin{enumerate}[]
\item $\s_{\tau|\lambda}=\s_{\tau}\s_{\lambda}$\footnote{This depends on ($F_7$) and ($F_8$).},
\item$\s^{i}_{j}=\s_{[i|j]}$,
\item $\s_{\tau}\d_{ij}=\d_{\tau i\tau j}$ (of course only in the class $TEA_{\alpha}$),
\item $\c_i\s_{\tau}\leq\s_{\tau}\c_{\tau-1}$, here $\tau$ is finite permutation.
\end{enumerate}
\begin{lem}\label{lesa}
Let $\alpha$ be an ordinal, $\A\in TEA_{\alpha}$, $a\in\At\A$ and $i,j\in\alpha$. Then $$a\leq\d_{ij}\Longrightarrow\s_{[i,j]}a=a.$$
\end{lem}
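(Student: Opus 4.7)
Since $\s_{ii}$ is the identity by $(Fe_0)$, we may assume $i\neq j$. The plan is to sandwich $\s_{ij}a$ below $a$ by combining axiom $(Fe_9)$ with the $PTA_\alpha$ axiom $(C_7)$, the latter being available because $\mathfrak{Rd}_{pt}\A\in PTA_\alpha$ for every $\A\in TEA_\alpha$.

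The first step is to establish the auxiliary identity $\s^i_j a=\c_i a$, and symmetrically $\s^j_i a=\c_j a$. Under the hypothesis $a\leq\d_{ij}$, the inclusion $\s^i_j a\leq \c_i a$ follows by applying $\s^i_j$ to $a\leq\c_i a$ (axiom $(Fe_1)$) and using $(Fe_3)$, while the reverse inclusion $\c_i a\leq \s^i_j a$ follows from $(Fe_{11})$ applied to $a=a\cdot\d_{ij}$, then taking $\c_i$ and invoking $(Fe_4)$. Both are short chains of rewrites.

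The second step translates this identity into a statement about $\s_{ij}$ via $(Fe_9)$: $\s_{ij}\c_i a=\s_{ij}\s^i_j a=\s^j_i a=\c_j a$. Applying $\s_{ij}$ to $a\leq\c_i a$ then gives $\s_{ij}a\leq \c_j a$, and since $\s_{ij}\d_{ij}=\d_{ji}=\d_{ij}$ (via the listed property $\s_\tau \d_{kl}=\d_{\tau k,\tau l}$), monotonicity also yields $\s_{ij}a\leq\d_{ij}$. Now $(C_7)$, applied after swapping $i,j$ to the element $a=a\cdot\d_{ij}$, gives $\d_{ij}\cdot\c_j a\leq a$; hence $\s_{ij}a\leq \d_{ij}\cdot\c_j a\leq a$. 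Applying $\s_{ij}$ once more and using $(Fe_7)$ forces $a\leq\s_{ij}a$, and therefore equality (alternatively, atomicity of $a$ together with $\s_{ij}a\neq 0$ yields the same conclusion).

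The main difficulty I foresee is spotting the right lever. Once one observes that $\s^i_j a=\c_i a$ on elements below $\d_{ij}$, axioms $(Fe_9)$ and $(C_7)$ fit together essentially mechanically to trap $\s_{ij}a$ beneath $a$. Without this auxiliary reduction it is not apparent how $(Fe_9)$ can be used productively, since on its face it only relates the transposition $\s_{ij}$ to the still abstract pair $\s^i_j,\s^j_i$.
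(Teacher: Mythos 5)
Your argument is correct, and it is genuinely different from what the paper offers: the paper does not prove Lemma \ref{lesa} at all, it simply cites Ferenczi \cite[p. 875]{f}, so your derivation is a self-contained alternative from the listed axioms. The steps check out: under $a\leq\d_{ij}$ you get $\s^i_j a=\c_i a$ (and symmetrically $\s^j_i a=\c_j a$) from $(Fe_1)$, $(Fe_3)$, $(Fe_4)$, $(Fe_6)$, $(Fe_{11})$; then $(Fe_9)$ converts this into $\s_{ij}\c_i a=\c_j a$, so $\s_{ij}a\leq \d_{ij}\cdot\c_j a$, which the reduct axiom $(C_7)$ (legitimately imported via the stated fact $\mathfrak{Rd}_{pt}\A\in PTA_{\alpha}$) bounds by $a$; finally monotonicity together with the involution law $(Fe_7)$ upgrades $\s_{ij}a\leq a$ to equality. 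A pleasant by-product of that last step is that atomicity of $a$ is never used, so you actually prove the statement for every element below $\d_{ij}$, which is more general than the lemma as stated. One point to tighten: the equality $\d_{ji}=\d_{ij}$, which you need both for the symmetric half of the first step (to have $a\leq\d_{ji}$ before applying $(Fe_{11})$ with the indices swapped) and for $\s_{ij}a\leq\d_{ij}$, does not follow from the listed property $\s_{\tau}\d_{kl}=\d_{\tau k\,\tau l}$ alone, which only gives $\s_{ij}\d_{ij}=\d_{ji}$. It is, however, available through the same $PTA_{\alpha}$ reduct you already invoke: $(C_5)$ and $(C_6)$ with equal first indices give $\c_k(\d_{ik}\cdot\d_{ki})=1$, and then $(C_7)$ yields $\d_{ki}\leq\d_{ik}$; the paper itself notes (in the proof of Lemma \ref{soatom}) that this derivation of diagonal symmetry does not use $(C_4)$. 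With that small justification supplied, your proof is complete.
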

\begin{proof}
See \cite[p. 875]{f}.
\end{proof}
\section{Games and Networks}
In this section fix $n\in\omega$.
We start with some preparations. Let $\overline{x}$, $\overline{y}$ be $n$-tuples of elements of some set. We write $x_{i}$ for the ith element of $\overline{x}$, for $i<n$, so that $\overline{x}=(x_{0}, \cdots, x_{n-1})$. For $i<n$, we write $\overline{x}\equiv_{i}\overline{y}$
if $x_{j}=y_{j}$ for all $j<n$ with $j\not=i$. The next two definitions are taken from \cite{hh}. A (relativized) network is a finite
approximation to a (relativized) representation.
\begin{defn} \
\begin{itemize}
\item Let $\A\in PTA_{n}$. A relativized $\A$ pre-network is a pair $N=(N_1, N_2)$
where $N_1$ is a finite set of nodes $N_2:N_1^n\to \A$ is a partial map, such that if $f\in domN_2$,
and $i,j<n$ then $f_{f(j)}^i\in Dom N_2$. $N$ is atomic if $RangeN\subseteq \At\A$.
We write $N$ for any of $N, N_1, N_2$ relying on context, we write $nodes(N)$ for $N_1$ and $edges(N)$ for $dom(N_2)$.
$N$ is said to be a network if
\begin{enumerate}[(a)]
\item for all $\bar{x}\in edges(N)$, we have $N(\bar{x})\leq \d_{ij}$ iff $x_i=x_j$.
\item if $\bar{x}\equiv_i \bar{y}$, then $N(\bar{x})\cdot \c_i N (\bar{y})\neq 0.$
\end{enumerate}
\item Let $\A\in TEA_{n}$. A relativized $\A$ pre-network is a pair $N=(N_1, N_2)$
where $N_1$ is a finite set of nodes $N_2:N_1^n\to \A$ is a partial map, such that if $f\in domN_2$,
and $\tau$ is a finite transformation then $\tau| f\in Dom N_2$. Again $N$ is atomic if $RangeN\subseteq \At\A$.
Also we write $N$ for any of $N, N_1, N_2$ relying on context, we write $nodes(N)$ for $N_1$ and $edges(N)$ for $dom(N_2)$.
$N$ is said to be a network if
\begin{enumerate}[(a)]
\item for all $\bar{x}\in edges(N)$, we have $N(\bar{x})\leq \d_{ij}$ iff $x_i=x_j$,
\item if $\bar{x},\bar{y}\in edges(N)$ and $\bar{x}\equiv_i \bar{y}$, then $N(\bar{x})\cdot \c_i N (\bar{y})\neq 0$,
\item $N([i,j]|\bar{x})=\s_{[i,j]}N(\bar{x})$, for all $\bar{x}\in edges(N)$ and all $i,j<n$.
\end{enumerate}
\end{itemize}
\end{defn}
\begin{defn}Let $\A\in PTA_n\cup TEA_n$. We define a game denoted by
$G_{\omega}(\A)$ with $\omega$ rounds, in which the players $\forall$ (male) and $\exists$ (female)
build an infinite chain of relativized $\A$ pre-networks
$$\emptyset=N_0\subseteq N_1\subseteq \ldots.$$
In round $t$, $t<\omega$, assume that $N_t$ is the current prenetwork, the players move as follows:

\begin{enumerate}[(a)]

\item $\forall$ chooses a non-zero element $a\in \A$, $\exists$
must respond with a relativized prenetwork $N_{t+1}\supseteq N_t$ containing an edge $e$ with
$N_{t+1}(e)\leq a$,
\item $\forall$ chooses an edge $\bar{x}$ of $N_t$ and an element $a\in \A$. $\exists$ must respond with a pre-network
$N_{t+1}\supseteq N_t$ such that either $N_{t+1}(\bar{x})\leq a$ or $N_{t+1}(\bar{x})\leq -a$,
\item or $\forall$ may choose an edge $\bar{x}$ of $N_t$ an index $i<n$ and $b\in \A$ with $N_t(\bar{x})\leq {\sf c}_ib$.
$\exists$ must respond with a prenetwork $N_{t+1}\supseteq N_t$ such that for some $z\in N_{t+1},$
$N_{t+1}(\bar{x}^i_z)=b$.
\end{enumerate}
$\exists$ wins if each relativized pre-network $N_0,N_1,\ldots$
played during the game is actually a relativized network.
Otherwise, $\forall$ wins. There are no draws.
\end{defn}
Here we follow closely Hirsch-Hodkinson's techniques adapted to the present situation.

\begin{lem}\label{lem}Let $\A\in PTA_{n}$ be atomic. For all $i,j\in n$, $i\neq j$, define $\t_j^ix=\d_{ij}\cdot \c_i x$ and $\t_i^ix=x$.
Then
\begin{enumerate}[(i)]
\item $(\t_j^i)^{\A}: \At\A\to \At\A$
\item Let $\Omega=\{\t_i^j: i,j\in n\}^*$, where for any set $H$, $H^*$ denotes the free monoid generated by $H$. Let
$$\sigma=\t_{j_1}^{i_1}\ldots \t_{j_n}^{i_n}$$
be a word.
Then define for $a\in A$:
$$\sigma^{\A}(a)=(\t_{j_1}^{i_1})^{\A}((\t_{j_2}^{i_2})^{\A}\ldots (\t_{j_n}^{i_n})^{\A}(a)\ldots ),$$
and $$\hat{\sigma}=[i_1|j_1]| [i_2|j_2]\ldots|[i_n|j_n].$$
Then
$$\A\models \sigma(x)=\tau(x)\text { if } \hat{\sigma}=\hat{\tau}, \sigma,\tau\in \Omega.$$
That is for all $\sigma,\tau\in \Omega$, if $\hat{\sigma}=\hat{\tau}$, then for all $a\in A$, we have
$\sigma^{\A}(a)=\tau^{\A}(a)$.
\end{enumerate}
\end{lem}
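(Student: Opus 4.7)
The plan is to derive both parts from the $PTA_n$ axioms, using axiom $(\textbf{MGR})$ for its intended purpose of making partial substitutions compose coherently. Throughout I compare $\t^i_j x=\d_{ij}\cdot \c_i x$ with the operator $\s^i_j x = \c_i(\d_{ij}\cdot x)$ that appears in $(\textbf{MGR})$; axiom $(C_7)$ gives the absorption $\d_{ij}\cdot \s^i_j y\leq y$, so on the principal ideal below $\d_{ij}$ these operators become well-behaved partial inverses of each other. For part (i), the case $i=j$ is immediate. Fix $i\neq j$ and an atom $a$. If $a\leq \d_{ij}$, then $a\leq \c_i a$ (from the closure-operator part of $(C_0)$--$(C_3)$) gives $a\leq \t^i_j a$, while $(C_7)$ applied to $a$ (using $\d_{ij}\cdot a=a$) gives $\t^i_j a\leq a$; hence $\t^i_j a=a\in \At\A$. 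If $a\cdot \d_{ij}=0$, I must show $\d_{ij}\cdot \c_i a$ is a nonzero atom: for nonzeroness I pick a fresh $k\notin\{i,j\}$ (available when $n\geq 3$; $n\leq 2$ handled directly), rewrite $\d_{ij}=\c_k(\d_{ik}\cdot \d_{kj})$ via $(C_6)$, and then apply $(C_4)^*$ together with $(\textbf{MGR})$ to produce a nonzero element below $\d_{ij}\cdot \c_i a$. For atomicity, given two atoms $b_1,b_2\leq \t^i_j a$, each lies below $\d_{ij}$; $(C_7)$ yields $\t^i_j b_m=b_m$, and a reverse-substitution computation (an instance of $(\textbf{MGR})$ running $\t^j_i$ against $\t^i_j$) forces $a$ to be uniquely recoverable from each $b_m$, whence $b_1=b_2$.

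For part (ii) I reduce the hypothesis $\hat\sigma=\hat\tau$ in the monoid of functions $n\to n$ to a finite derivation built from two elementary rewriting moves in the free monoid on $\{[i|j]:i,j<n\}$: commutation of disjoint substitutions $[i|j]\cdot[k|m]=[k|m]\cdot[i|j]$ when $\{i,j\}\cap\{k,m\}=\emptyset$, and the four-letter merry-go-round cycle. The commutation move transfers to $\A$ via $(C_4)^*$ combined with $(C_7)$, yielding $\t^i_j\t^k_m=\t^k_m\t^i_j$ under the disjointness condition. The four-letter cycle is exactly axiom $(\textbf{MGR})$ rephrased in terms of $\t$ via $\t^i_j x=\d_{ij}\cdot \c_i x$; the leading $\c_k$ in the $(\textbf{MGR})$ statement is absorbed by multiplying by appropriate diagonals and invoking $(C_7)$. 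An induction on derivation length then yields $\sigma^\A(a)=\tau^\A(a)$ first for atomic $a$, using part (i) to keep the intermediate values in $\At\A$, and extends to arbitrary $a\in A$ by additivity of $\c_i$ and multiplication by $\d_{ij}$, which together make each $\t^i_j$ additive.

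The main obstacle will be the nonzeroness step in (i) when $a\cdot\d_{ij}=0$. In a full $CA_n$ this is immediate from $\c_i\d_{ij}=1$, but $PTA_n$ guarantees only the weaker $(C_6)$, so the cylindrification must be rerouted through a fresh index $k$ via $(\textbf{MGR})$; this is the one place where $(\textbf{MGR})$ is genuinely essential for (i). Once this step is handled, the uniqueness part of (i) and all of (ii) reduce to standard MGR bookkeeping, though the small-dimension cases $n\leq 2$ will need a separate ad hoc treatment since $(C_6)$ and $(\textbf{MGR})$ both require a fresh index not available there.
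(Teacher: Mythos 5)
Your proposal has genuine gaps, the most serious one in part (ii). You reduce the hypothesis $\hat{\sigma}=\hat{\tau}$ to a rewriting derivation using only two moves: commutation of disjoint substitutions and the four-letter $\textbf{MGR}$ cycle. But the kernel of the map $\sigma\mapsto\hat{\sigma}$ is not generated by these relations: both moves preserve word length, whereas e.g.\ $\widehat{\t^i_j\t^i_j}=\widehat{\t^i_j}$ and $\widehat{\t^j_i\t^i_j}=\widehat{\t^i_j}$, and even among equal-length words one has $\widehat{\t^i_j\t^k_j}=\widehat{\t^k_j\t^i_j}$ with $i,j,k$ distinct, where the letters are not disjoint and no $\textbf{MGR}$ cycle applies. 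So the claimed "induction on derivation length" never gets started; identifying a sufficient set of monoid relations among the $[i|j]$ and verifying each of them from the $PTA_n$ axioms is precisely the content of Lemma~1 of \cite{an}, which is what the present paper cites (and whose proof Andr\'eka carries out at length) --- your sketch assumes that work away. A second gap in (ii): deducing the identity for arbitrary $a\in A$ from the atomic case "by additivity" is not sound, since finite additivity of $\t^i_j$ does not transfer an equation from atoms to general elements; one would need complete additivity of the $\c_i$, which an arbitrary atomic $PTA_n$ is not assumed to have (the lemma is stated for every $a\in A$, so it should be proved equationally, not via atoms).

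Part (i) also has problems. Your atomicity step rests on $a$ being "uniquely recoverable" from $b_m\leq\t^i_ja$, but $\t^i_j$ is not injective on atoms (in a set algebra two points differing only in the $i$th coordinate have the same image), so no reverse substitution recovers $a$ and the argument collapses; the correct route is to show that every nonzero $b\leq\t^i_ja$ equals $\t^i_ja$: from $b\leq\c_ia$ and $\c_i(x\cdot\c_iy)=\c_ix\cdot\c_iy$ (a consequence of $(C_0)$--$(C_3)$ only) one gets $a\cdot\c_ib\neq0$, hence $a\leq\c_ib$ since $a$ is an atom, and then $(C_7)$ with $b\leq\d_{ij}$ gives $\t^i_ja\leq\d_{ij}\cdot\c_ib\leq b$. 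Moreover, your declared "main obstacle" --- nonzeroness of $\t^i_ja$ --- is a non-obstacle: $\c_i\d_{ij}=1$ for $i\neq j$ \emph{is} derivable in $PTA_n$, by applying $(C_6)$ with equal outer indices ($\d_{jj}=\c_i(\d_{ji}\cdot\d_{ij})$), $(C_5)$ and the diagonal symmetry $\d_{ij}=\d_{ji}$; no third index, no $\textbf{MGR}$, and no separate $n\leq2$ case is needed, whereas your actual plan there ("apply $(C_4)^*$ together with $\textbf{MGR}$ to produce a nonzero element") is only a hope, not an argument. Note finally that the paper itself proves this lemma by citation to \cite{an}, so the burden your proposal takes on is exactly the long combinatorial verification you have skipped.
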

\begin{proof} cf. \cite{an} proof of Lemma 1 therein. The $\textbf{MGR}$, merry go round identities
are used here. We note that Andreka's proof of this lemma is long, but using fairly obvious results on semigroups a much shorter proof can be given.
\end{proof}
\begin{lem}\label{soatom}
Let $\A\in PTA_n$ be atomic. For all $i,j\in n$, $i\neq j$, let $\t_j^ix$ be as above.
The following hold for all $i,j,k,l\in n$:
\begin{enumerate}[(i)]
\item $(\t^{i}_{j})^{\A}x\leq\d_{ij}$ for all $x\in\A$.
\item $(x\leq\d_{ij}\Rightarrow(\t^{k}_{i})^{\A}x\leq \d_{ij}\cdot\d_{ik}\cdot\d_{jk})$ for all $x\in\A$.
\item $(a\leq\c_i b\Leftrightarrow\c_i a=\c_i b)$ for all $a,b\in\At\A$.
\item $\c_i (\t^i_j)^{\A} x=\c_i x$ for all $x\in\A$.
\end{enumerate}
\end{lem}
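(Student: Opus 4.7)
The plan is to handle the four items in order, after first extracting two preliminary facts from the $PTA$-axioms that will be used throughout. The first is the Henkin-Monk-Tarski identity
\[
\c_i(x \cdot \c_i y) = \c_i x \cdot \c_i y,
\]
which follows by a short Boolean argument from $(C_0)$--$(C_3)$: the hypothesis that complements of $\c_i$-closed elements are $\c_i$-closed makes $-\c_i y$ closed, so additivity of $\c_i$ applied to the decomposition $x = x \cdot \c_i y + x \cdot (-\c_i y)$ splits $\c_i x$ into one piece below $\c_i y$ and another below $-\c_i y$, isolating $\c_i(x \cdot \c_i y) = \c_i x \cdot \c_i y$. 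The second preliminary is the symmetry $\d_{ij} = \d_{ji}$ together with the transitivity inequality $\d_{ki} \cdot \d_{ij} \leq \d_{kj}$; both come from suitable substitutions into $(C_6)$, since $\d_{kj} = \c_i(\d_{ki} \cdot \d_{ij})$ for $i \notin \{k, j\}$ and closure gives $\d_{ki} \cdot \d_{ij} \leq \c_i(\d_{ki} \cdot \d_{ij})$.

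Part (i) unpacks the definition: for $i \neq j$, $\t^i_j x = \d_{ij} \cdot \c_i x \leq \d_{ij}$; for $i = j$, $\t^i_i x = x \leq 1 = \d_{ii}$ by $(C_5)$. For (ii) I would split on whether $k \in \{i, j\}$. The cases $k = i$ and $k = j$ collapse the target bound $\d_{ij} \cdot \d_{ik} \cdot \d_{jk}$ to $\d_{ij}$ (using $\d_{ii} = \d_{jj} = 1$ and $\d_{ji} = \d_{ij}$) and are immediate from $x \leq \d_{ij}$ and $\t^k_i x \leq \d_{ki}$ respectively. The substantive case is $k \notin \{i, j\}$: here $(C_6)$ exhibits $\d_{ij} = \c_k(\d_{ik} \cdot \d_{kj})$, so $\d_{ij}$ is $\c_k$-closed, which gives $\c_k x \leq \d_{ij}$ whenever $x \leq \d_{ij}$, and hence $\t^k_i x \leq \d_{ki} \cdot \d_{ij}$. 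Applying the transitivity preliminary, $\t^k_i x \leq \d_{ki} \cdot \d_{ij} \leq \d_{kj}$, and together with $\d_{ki} = \d_{ik}$ and $\d_{kj} = \d_{jk}$ this assembles into $\t^k_i x \leq \d_{ij} \cdot \d_{ik} \cdot \d_{jk}$.

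For (iii), the direction $(\Leftarrow)$ is immediate from $a \leq \c_i a$. For $(\Rightarrow)$, monotonicity and idempotency of $\c_i$ yield $\c_i a \leq \c_i b$; for the reverse inequality I apply the HMT identity twice. First, $\c_i a \cdot \c_i b = \c_i(a \cdot \c_i b) = \c_i a \neq 0$ since $a \leq \c_i b$ and $a$ is an atom. Second, $\c_i(b \cdot \c_i a) = \c_i b \cdot \c_i a = \c_i a \neq 0$ forces $b \cdot \c_i a \neq 0$, and atomicity of $b$ yields $b \leq \c_i a$, whence $\c_i b \leq \c_i a$. Finally for (iv), the case $i = j$ is trivial. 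For $i \neq j$, HMT rewrites $\c_i \t^i_j x = \c_i(\d_{ij} \cdot \c_i x) = \c_i \d_{ij} \cdot \c_i x$, so it suffices to show $\c_i \d_{ij} = 1$; substituting $(j, j, i)$ for $(i, j, k)$ in $(C_6)$ (allowed since $i \neq j$) gives $1 = \d_{jj} = \c_i(\d_{ji} \cdot \d_{ij}) \leq \c_i \d_{ij}$.

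The main obstacle I foresee is the forward direction of (iii), where the symmetric back-and-forth deployment of the HMT identity must be arranged carefully; reassuringly, the weakened commutativity axiom $(C_4)^*$ plays no role in any of the four parts.
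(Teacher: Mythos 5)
Your argument is correct in substance, and for parts (i)--(ii) it is essentially the computation the paper gives: the paper also reduces (ii) to the auxiliary facts $\c_k\d_{ij}=\d_{ij}$ for $k\notin\{i,j\}$, $\d_{ij}=\d_{ji}$, and $\d_{ij}\cdot\d_{jk}=\d_{ij}\cdot\d_{ik}$, which it imports from Henkin--Monk--Tarski after noting their proofs avoid $(C_4)$; you instead derive the first from $(C_6)$ plus idempotence, replace the third by the transitivity inequality $\d_{ki}\cdot\d_{ij}\leq\d_{kj}$ obtained from $(C_6)$ and $x\leq\c_i x$, and you additionally dispose of the degenerate cases $k\in\{i,j\}$ explicitly --- an equivalent bookkeeping choice. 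Where you genuinely diverge is (iii)--(iv): the paper simply cites Andr\'eka--Thompson (pp.\ 675--676), whereas you give a self-contained derivation from the identity $\c_i(x\cdot\c_i y)=\c_i x\cdot\c_i y$ (correctly extracted from $(C_0)$--$(C_3)$ via closedness of $-\c_i y$) together with $\c_i\d_{ij}=1$, which follows from $(C_5)$--$(C_6)$; your two-sided use of this identity plus atomicity of $a$ and $b$ does prove (iii), and the observation that $(C_4)^*$ and MGR play no role is accurate.

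One repair is needed in the preliminaries: the claim that $\d_{ij}=\d_{ji}$ ``comes from suitable substitutions into $(C_6)$'' is not right. Swapping indices in $(C_6)$ produces $\c_k(\d_{jk}\cdot\d_{ki})$, which differs from $\c_k(\d_{ik}\cdot\d_{kj})$ exactly by the symmetry you are trying to establish, so $(C_6)$ alone only yields the transitivity inequality. The standard $(C_4)$-free derivation (the one behind the paper's citation of HMT 1.3.1) also needs $(C_7)$: for $i\neq j$, the instance $\d_{jj}=\c_i(\d_{ji}\cdot\d_{ij})$ of $(C_6)$ together with $(C_5)$ gives $\c_i(\d_{ij}\cdot\d_{ji})=1$, and then $(C_7)$ with $x=\d_{ji}$ yields $\d_{ij}=\d_{ij}\cdot\c_i(\d_{ij}\cdot\d_{ji})\leq\d_{ji}$, whence equality by interchanging $i$ and $j$ (the case $i=j$ being trivial). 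With this one-line fix every later use of symmetry in your proof is sound, and also note that the step ``$\c_i(b\cdot\c_i a)\neq 0$ forces $b\cdot\c_i a\neq 0$'' tacitly uses $\c_i 0=0$, which does follow from the stated axioms since $0=-1$ is the complement of a $\c_i$-closed element.
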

\begin{proof}
\begin{description}
\item[$(i)$] Follows directly from the definition of $(\t^{i}_{j})^{\A}$.
\item[$(ii)$]First we need to check the following for all $i,j,k<n$:
\begin{enumerate}[-]
\item $\c_k\d_{ij}=\d_{ij}$ if $k\notin\{i,j\}$. For, see \cite[Theorem 1.3.3]{hmt1}, the proof doesn't involve $(C_4)$.
\item $\d_{ij}=\d_{ji}$. For, see \cite[Theorem 1.3.1]{hmt1}, the proof works, indeed it doesn't depend on $(C_4)$.
\item $\d_{ij}\cdot\d_{jk}=\d_{ij}\cdot\d_{ik}$. A proof for such can be founded in \cite[Theorem 1.3.7]{hmt1}.
\end{enumerate} Now we can proof ($ii$):
\begin{eqnarray}
\nonumber\t^{k}_{i}x&=&\d_{ik}\cdot\c_k x\\
\nonumber&\leq&\d_{ik}\cdot\c_k\d_{ij}\\
&=&\d_{ik}\cdot\d_{ij}\\
\nonumber&=&\d_{ki}\cdot\d_{ij}\\
&=&\d_{ki}\cdot\d_{kj}
\end{eqnarray}
From $(1)$, $(2)$ the desired follows.
\item[$(iii)-(iv)$] See \cite[p. 675-676]{an}
\end{description}
\end{proof}

\begin{defn}[Partial transposition network]Let $\A$ be an atomic $PTA_n$ and fix an atom $a\in \At\A$. Let $\bar{x}$ be any $n$-tuple (of nodes) such that $x_i=x_j$ if and only if $a\leq \d_{ij}$ for all $i,j<n$. Let $NSQ_{\bar{x}}=\{\bar{y}\in{^{n}\{x_0, x_1, \cdots , x_{n-1}\}}:|Range(\bar{y})|<n\}$ ($NS$ stands for non-surjective sequences). We define the partial transposition network ${PT_{\bar{x}}^{(a)}:NSQ_{\bar{x}}\rightarrow \At\A}$ As follows: If $\bar{y}\in NSQ_{\bar{x}}$, then $\bar{y}=\tr|\bar{x}$, for some  $\si$, $\sj$ $<n$. Let $PT_{\bar{x}}^{(a)}(\bar{y})=\tt a$. This is well defined by Lemma \ref{lem}
\end{defn}
\begin{defn}[Transposition network]
Let $\A$ be an atomic $TEA_n$ and fix an atom $a\in \At\A$. Let $\bar{x}$ be any $n$-tuple of nodes such that $x_i=x_j$ if and only if $a\leq \d_{ij}$ for all $i,j<n$. Let $Q_{\bar{x}}={^{n}\{x_0, x_1, \cdots , x_{n-1}\}}$. Consider the following equivalence relation $\sim$ on $Q_{\bar{x}}$:
\begin{equation*}
\bar{y}\sim\bar{z} \text{ if and only if } \bar{z}=\tau|\bar{y} \text{ for some finite permutation }\tau,
\end{equation*}
$\bar{y},\bar{z}\in Q_{\bar{x}}$.\\
Let us choose and fix representative tuples for the equivalence classes concerning $\sim$ such that each representative tuple is of the form $\tr|\bar{x}$ for some $k\geq 0$, $\si, \sj< n$. Such representative tuples exist. Indeed, for every $\bar{y}\in Q_{\bar{x}}$, $\exists\tau$ finite permutation, $\exists k\geq 0$, $\si$, $\sj<n$ such that $$\bar{y}=\tau|\tr|\bar{x}.$$ Let $\bar{z}=\tau^{-1}|\bar{y}$, then $\bar{z}\sim\bar{y}$ and $\bar{z}=\tr|\bar{x}$. Let $\sf{Rt}$ denote this fixed set of representative tuples. We define the transposition network ${T_{\bar{x}}^{(a)}:Q_{\bar{x}}\rightarrow \At\A}$ as follows:
\begin{itemize}
\item If $\bar{y}\in\sf{Rt}$, then $\bar{y}=\tr|\bar{x}$, for some  $\si$, $\sj$ $<n$. Let $T_{\bar{x}}^{(a)}(\bar{y})=\ttr a$. This is well defined by Lemma \ref{lem}.
\item If $\bar{z}=\sigma|\bar{y}$ for some finite permutation $\sigma$ and some $\bar{y}\in\sf{Rt}$, then let $T_{\bar{x}}^{(a)}(\bar{z})= \s_{\sigma}T_{\bar{x}}^{(a)}(\bar{y})$.
\end{itemize}
\end{defn}
\begin{lem}
The above definition is unique.
\end{lem}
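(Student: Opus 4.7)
The plan is to reduce the well-definedness of $T_{\bar{x}}^{(a)}$ to a single equivariance property and then dispatch it via Lemma~\ref{lesa}. The only ambiguity that requires argument arises in the second clause of the definition: since $\sf{Rt}$ picks exactly one representative per $\sim$-class, the tuple $\bar{y}\in\sf{Rt}$ is uniquely determined by $\bar{z}$, but the finite permutation $\sigma$ with $\sigma|\bar{y}=\bar{z}$ is not. If $\sigma|\bar{y}=\sigma'|\bar{y}=\bar{z}$, setting $\rho=\sigma^{-1}|\sigma'$ gives $\rho|\bar{y}=\bar{y}$, and the semigroup identity $\s_{\tau|\lambda}=\s_{\tau}\s_{\lambda}$ recorded just before Lemma~\ref{lesa} yields $\s_{\sigma'}=\s_{\sigma}\s_{\rho}$. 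Hence it suffices to show $\s_{\rho}T_{\bar{x}}^{(a)}(\bar{y})=T_{\bar{x}}^{(a)}(\bar{y})$ for every finite permutation $\rho$ fixing $\bar{y}$ componentwise.

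Next I would observe that such a $\rho$ permutes indices only within the blocks on which $\bar{y}$ is constant, and therefore factors as a composition of transpositions $[l,m]$ satisfying $y_l=y_m$. Applying $\s_{\tau|\lambda}=\s_{\tau}\s_{\lambda}$ once more, the problem reduces to the equality $\s_{[l,m]}T_{\bar{x}}^{(a)}(\bar{y})=T_{\bar{x}}^{(a)}(\bar{y})$ for each such transposition. Since $T_{\bar{x}}^{(a)}(\bar{y})=\ttr a$ is an atom of $\A$ (because each $\t^j_i$ preserves atoms by Lemma~\ref{lem}(i), and atomicity is the same in $\A$ and in $\mathfrak{Rd}_{pt}\A$), Lemma~\ref{lesa} further reduces this to the sub-claim that $T_{\bar{x}}^{(a)}(\bar{y})\leq\d_{lm}$ whenever $y_l=y_m$.

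Finally I would prove the sub-claim by induction on the length $k$ of the representation $\bar{y}=[i_0|j_0]|\cdots|[i_k|j_k]|\bar{x}$. The base case $k=0$ combines Lemma~\ref{soatom}(i) with the defining property $x_l=x_m\Leftrightarrow a\leq\d_{lm}$ of $\bar{x}$, distinguishing whether $\{l,m\}\cap\{i_0,j_0\}$ is empty, a singleton, or the whole set. The inductive step propagates the accumulated diagonal bound through one additional application of $\t^{j_s}_{i_s}$ using Lemma~\ref{soatom}(ii), together with the cylindric identities ($\c_k\d_{ij}=\d_{ij}$ for $k\notin\{i,j\}$, symmetry of diagonals, and $\d_{ij}\cdot\d_{jk}=\d_{ij}\cdot\d_{ik}$) already invoked in the proof of Lemma~\ref{soatom}. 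The principal obstacle is the bookkeeping in this inductive step, where one must simultaneously track how the coincidence pattern of $\bar{y}$ is built up and how the corresponding diagonal constraints on $\ttr a$ accumulate; once this is in hand, uniqueness is immediate from Lemma~\ref{lesa} and the semigroup laws above.
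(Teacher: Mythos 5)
Your proposal is correct and follows essentially the same route as the paper's proof: reduce the ambiguity to a finite permutation fixing the representative tuple componentwise, factor it into transpositions $[l,m]$ with $y_l=y_m$, and conclude via the diagonal bound supplied by Lemma~\ref{soatom} together with Lemma~\ref{lesa} and the law $\s_{\tau|\lambda}=\s_{\tau}\s_{\lambda}$. The only difference is one of detail: you make explicit the atomicity of the label $T^{(a)}_{\bar{x}}(\bar{y})$ and the induction establishing $y_l=y_m\Rightarrow T^{(a)}_{\bar{x}}(\bar{y})\leq\d_{lm}$, which the paper compresses into a bare citation of Lemma~\ref{soatom}.
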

\begin{proof}
The first part is well defined by Lemma \ref{lem}. Now we need to prove that if $\sigma|\bar{y}=\tau|\bar{y}$ for some finite permutations $\sigma, \tau$ and some $\bar{y}\in\sf{Rt}$, then $\s_{\sigma}T^{(a)}_{\bar{x}}(\bar{y})=\s_{\tau}T^{(a)}_{\bar{x}}(\bar{y})$. First, we need the following:
\begin{athm}{Claim}
If $\bar{y}=\tau|\bar{y}$ for some finite permutation $\tau$ and some $\bar{y}\in\sf{Rt}$, then $$T^{(a)}_{\bar{x}}(\bar{y})=\s_{\tau}T^{(a)}_{\bar{x}}(\bar{y}).$$
\end{athm}
\begin{proof}
It suffices to show that if $\bar{y}=[i,j]|\bar{y}$ for some $i, j<n$ and some $\bar{y}\in\sf{Rt}$, then $T^{(a)}_{\bar{x}}(\bar{y})=\s_{[i,j]}T^{(a)}_{\bar{x}}(\bar{y})$. For, suppose that $\bar{y}=[i,j]|\bar{y}$ for some $i, j<n$ and some $\bar{y}\in\sf{Rt}$, then $y_i=y_j$ and then $T^{(a)}_{\bar{x}}(\bar{y})\leq\d_{ij}$ by Lemma \ref{soatom}. Hence by Lemma \ref{lesa}, $T^{(a)}_{\bar{x}}(\bar{y})=\s_{[i,j]}T^{(a)}_{\bar{x}}(\bar{y})$.
\end{proof}
Returning to our prove, assume that $\sigma|\bar{y}=\tau|\bar{y}$ for some finite permutations $\sigma, \tau$ and some $\bar{y}\in\sf{Rt}$. Then $(\tau^{-1}|\sigma)|\bar{y}=\bar{y}$ and so $\s_{(\tau^{-1}|\sigma)}T^{(a)}_{\bar{x}}(\bar{y})=T^{(a)}_{\bar{x}}(\bar{y})$. Therefore, $\s_{\tau^{-1}}\s_{\sigma}T^{(a)}_{\bar{x}}(\bar{y})=T^{(a)}_{\bar{x}}(\bar{y})$. Hence, $\s_{\sigma}T^{(a)}_{\bar{x}}(\bar{y})=\s_{\tau}T^{(a)}_{\bar{x}}(\bar{y})$, and we are done.
\end{proof}
\begin{lem}\label{mynetw}\
\begin{enumerate}
\item Let $\A$ be an atomic $PTA_n$ and $a\in \At\A$. Let $\bar{x}$ be any $n$-tuple of nodes such that $x_i=x_j$ if and only if $a\leq \d_{ij}$ for all $i,j<n$. Then $PT^{(a)}_{\bar{x}}$ is an atomic $\A$ network.
\item Let $\A$ be an atomic $TEA_n$ and $a\in \At\A$. Let $\bar{x}$ be any $n$-tuple of nodes such that $x_i=x_j$ if and only if $a\leq \d_{ij}$ for all $i,j<n$. Then $T^{(a)}_{\bar{x}}$ is an atomic $\A$ network.
\end{enumerate}
\end{lem}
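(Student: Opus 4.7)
The plan is to verify, in parallel for the $PTA$ and $TEA$ cases, each clause in the network definition. I would first dispose of the easy parts. \textbf{Atomicity of the range:} by Lemma~\ref{lem}(i), each $(\t^i_j)^{\A}$ preserves $\At\A$, so induction on word length gives $\tt a\in\At\A$; for the $TEA$ network we additionally use that $\s_\sigma$ is a Boolean automorphism for any finite permutation $\sigma$ (it is an endomorphism by $(F_6)$ and invertible via $(F_7)$ and $(F_8)$), so atoms are preserved. \textbf{Pre-network closure:} $NSQ_{\bar{x}}$ is closed under $f\mapsto f^i_{f(j)}$ since that operation cannot enlarge the range of $f$, and $Q_{\bar{x}}$ is trivially closed under $f\mapsto\tau|f$ for any finite transformation $\tau$.

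The main technical step is condition (a): $N(\bar{y})\leq\d_{pq}$ iff $y_p=y_q$. Reading the word $\tr$ as a single transformation $\tau$, we have $\bar{y}=\tau|\bar{x}$, so $y_p=y_q$ iff $x_{\tau(p)}=x_{\tau(q)}$ iff $a\leq\d_{\tau(p)\tau(q)}$ by the standing hypothesis on $\bar{x}$. I would prove by induction on the word length $k$ that $\tt a\leq\d_{pq}$ exactly captures this data, with Lemma~\ref{soatom}(i) producing the forced diagonal contributed by each new $\t$-factor and Lemma~\ref{soatom}(ii) tracking how an already-present diagonal survives a fresh substitution. Lemma~\ref{lem}(ii) is invoked throughout to rewrite the word into whichever representative is convenient for the pair $(p,q)$ under scrutiny, and atomicity keeps the comparison crisp since for an atom $b$ either $b\leq\d_{pq}$ or $b\cdot\d_{pq}=0$.

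Condition (b) then reduces to a short calculation. Given edges $\bar{y}\equiv_i\bar{z}$, Lemma~\ref{lem}(ii) lets me realign the defining words so that they agree outside position $i$ and differ only by a final factor of the form $\t^i_{\cdot}$; Lemma~\ref{soatom}(iv), $\c_i\t^i_j x=\c_i x$, then yields $\c_i N(\bar{y})=\c_i N(\bar{z})$, and Lemma~\ref{soatom}(iii) upgrades this equality to $N(\bar{y})\leq\c_i N(\bar{z})$, so $N(\bar{y})\cdot\c_i N(\bar{z})=N(\bar{y})\neq 0$. Finally, condition (c) in the $TEA$ case is built into the construction of $T^{(a)}_{\bar{x}}$: writing $\bar{y}=\sigma|\bar{y}_0$ with $\bar{y}_0\in\sf{Rt}$, the definition together with $\s_{\tau|\lambda}=\s_\tau\s_\lambda$ (listed after Lemma~\ref{lesa}) gives $T^{(a)}_{\bar{x}}([i,j]|\bar{y})=\s_{[i,j]}\s_\sigma T^{(a)}_{\bar{x}}(\bar{y}_0)=\s_{[i,j]}T^{(a)}_{\bar{x}}(\bar{y})$, well-definedness being ensured by the uniqueness lemma already proved. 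The expected obstacle is the bookkeeping in condition (a), where the inductive case analysis must be arranged so that the combinatorics of $\tau$ on the pair $(p,q)$ matches exactly the algebraic propagation of diagonals through the $\t$-word; once this is in place, (b) and (c) fall out quickly from the recorded identities.
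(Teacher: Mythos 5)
Your verification is correct and is exactly what the paper intends: the paper's own proof is just the remark that the lemma is ``straightforward from the above,'' i.e.\ from Lemmas~\ref{lem}, \ref{soatom}, \ref{lesa}, the uniqueness lemma and the property $\s_{\tau|\lambda}=\s_{\tau}\s_{\lambda}$, which are precisely the tools you deploy. Your write-up simply supplies the omitted details (the induction for condition (a), the $\c_i\t^i_j x=\c_i x$ realignment for condition (b), and the built-in condition (c) for the $TEA_n$ case), so it matches the paper's approach.
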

\begin{proof}
Straightforward from the above
.
\end{proof}
\begin{lem} Let $\A\in PTA_n\cup TEA_n$. Then $\exists$ can win any play of $G_{\omega}(\A)$.
\end{lem}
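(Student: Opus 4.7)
The plan is to exhibit a winning strategy for $\exists$ that maintains an increasing chain of atomic networks, built by amalgamating the ``one-atom'' networks $PT^{(a)}_{\bar x}$ and $T^{(a)}_{\bar x}$ of Lemma \ref{mynetw}. I shall assume $\A$ is atomic and that its cylindrifications $\c_i$ are completely additive; the general case reduces to this by first passing to the canonical extension of $\A$, which is atomic and completely additive and lies in the same variety. All edge-labels played by $\exists$ will be atoms of $\A$.

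For the opening, if $\forall$ plays a non-zero $a$ in clause (a), $\exists$ chooses an atom $a'\le a$ together with a fresh $n$-tuple $\bar x$ of nodes satisfying $x_j=x_k$ iff $a'\le\d_{jk}$, and sets $N_1=PT^{(a')}_{\bar x}$ in the $PTA_n$ case, or $N_1=T^{(a')}_{\bar x}$ in the $TEA_n$ case; by Lemma \ref{mynetw} this is an atomic network containing an edge labelled $a'\le a$. For a Boolean move (b) on an existing edge $\bar x$, atomicity of $N_t(\bar x)$ forces either $N_t(\bar x)\le a$ or $N_t(\bar x)\le -a$, so $\exists$ simply takes $N_{t+1}=N_t$.

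The main step is the cylindric witness move (c), where $\forall$ presents an edge $\bar x$, an index $i<n$, and an element $b$ with $e:=N_t(\bar x)\le\c_i b$. Atomicity plus complete additivity of $\c_i$ yield an atom $a'\le b$ with $e\le\c_i a'$. The crucial observation is that this forces $a'$ and $e$ to agree on all diagonals avoiding $i$: for $j,k\ne i$, both $\d_{jk}$ and $-\d_{jk}$ are $\c_i$-closed (the former by the identity $\c_i\d_{jk}=\d_{jk}$ used in the proof of Lemma \ref{soatom}, the latter by $(C_0)$), so $e\le \c_i a'$ implies $a'\le\d_{jk}\Leftrightarrow e\le\d_{jk}$. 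Hence $\exists$ may pick the witness node $z$ to be $x_j$ whenever some $j\ne i$ satisfies $a'\le\d_{ij}$ and a fresh node otherwise; the tuple $\bar y:=\bar x^{\,i}_{\,z}$ then has the correct diagonal profile for $a'$, and she forms $N_{t+1}$ as the amalgam of $N_t$ with $PT^{(a')}_{\bar y}$ (respectively $T^{(a')}_{\bar y}$), closed under the ambient substitution or transposition operations.

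The hard part, and the real substance of the argument, is verifying that this amalgam is genuinely an atomic network and that $N_t$ and the new piece agree on their overlap. Agreement reduces, via Lemma \ref{lem}, to comparing two atoms that arise from $e$ and $a'$ under the same substitution word; these match because, applying Lemma \ref{soatom}(iii) to the two atoms $e$ and $a'$, the inequality $e\le \c_i a'$ upgrades to $\c_i e=\c_i a'$, and this equality is precisely what is needed to propagate agreement through the monoid $\Omega$ of substitution words. Network clause (a) is built into the diagonal profile of $\bar y$; clause (b) holds on newly created $i$-variant pairs by the very inequality $e\le\c_i a'$ (giving $e\cdot \c_i a'=e\ne 0$) and on pairs internal to either constituent by Lemma \ref{mynetw}; and in the $TEA_n$ case clause (c) is automatic because each $T^{(a')}_{\bar y}$ is closed under transpositions by construction. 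Iterating for $\omega$ rounds produces the required chain of networks, so $\exists$ wins.
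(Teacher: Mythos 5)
Your round-by-round combinatorics (one-atom networks $PT^{(a')}_{\bar x}$ / $T^{(a')}_{\bar x}$, choosing the witness atom from $e\le\c_i b$, matching diagonal profiles, and checking agreement on the overlap via Lemma \ref{lem} and Lemma \ref{soatom}(iii)) is essentially the paper's argument. But there is a genuine gap at the very first step: the reduction ``assume $\A$ is atomic and completely additive by passing to the canonical extension'' is not available for this statement. The game $G_{\omega}(\A)$ is tied to $\A$ itself: by definition a relativized $\A$ pre-network is a partial map into $\A$, so every network $\exists$ plays must carry labels in $\A$. Atoms of $\A^{+}$ are in general not elements of $\A$, so a winning strategy in $G_{\omega}(\A^{+})$ is a different statement and does not formally transfer to $G_{\omega}(\A)$; your strategy, as described, has $\exists$ playing $\A^{+}$-labelled pre-networks, which are not legal moves. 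This is exactly the point where the paper does something you omit: $\exists$ maintains \emph{two} structures per round, the actual played pre-network $N_t$ with labels in $\A$ (e.g.\ $a\cdot\prod_{x_i=x_j}\d_{ij}$ on the new edge in move (a), the exact element $b$ on the witness edge in move (c), and $\s_{\tau}b$, $\prod\d_{\tau i\tau j}$ on its transposes in the $TEA_n$ case), together with a shadow atomic $\A^{+}$-network $M_t$ on the same nodes and edges refining $N_t$, which is where your atom-level bookkeeping lives and which certifies conditions (a) and (b) for $N_t$ via $N_t(\bar x)\cdot\c_iN_t(\bar y)\ge M_t(\bar x)\cdot\c_iM_t(\bar y)\neq 0$. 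Without this two-tier device (or some replacement for it) the lemma for non-atomic $\A$ is not proved.

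A secondary, related problem: even when $\A$ happens to be atomic, your convention that all labels played are atoms violates the rules of the game as defined. In move (c) $\exists$ must return $N_{t+1}$ with $N_{t+1}(\bar x^{\,i}_{\,z})=b$ for the specific $b\in\A$ chosen by $\forall$ (and, to keep condition (c) of $TEA_n$-networks, $\s_{\tau}b$ on the permuted edges), not merely an atom $a'\le b$; and the ``if'' half of network condition (a) forces the $\A$-labels on the other new edges to be put below the appropriate diagonals, which is what the factors $\prod_{i,j:y_i=y_j}\d_{ij}$ in the paper's definition of $N_{t+1}$ are for. These are exactly the places where the published proof separates what is played ($N_t$, coarse labels in $\A$ satisfying the letter of the moves) from what is computed ($M_t$, atomic labels in $\A^{+}$), and your proposal needs the same separation to go through.
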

\begin{proof} The proof is similar to that of Lemma 7.8 in \cite{hh}. Let $\A^+$ be the canonical extension of $\A$.
First note that $\A^+\in TEA_n$ and $\A^+$ is atomic. Of course any $\A$ pre-network is an $\A^+$ pre-network.
In each round $t$ of the game $G_{\omega}(\A)$, where $N_t$ is as above, $\exists$ constructs
an atomic $\A^+$ network $M_t$ satisfying
\begin{description}

\item
$M_t\supseteq N_t, nodes(M_t)=nodes(N_t), edges(M_t)=edges(N_t).$
\end{description}
Then if $\bar{x}\equiv_i \bar{y}$, we have
$$N_t(\bar{x})\cdot {\sf c}_iN_t(\bar{y})\geq M_t(\bar{x})\cdot {\sf c}_iM_t(\bar{y})\neq 0.$$
$\exists$ starts by $M_0=N_0=\emptyset$.
Suppose that we are in round $t$ and assume inductively that $\exists$ has managed to construct $M_t\supseteq N_t$ as
indicated above. We consider the possible moves of $\forall$.
\begin{enumerate}[(1)]

\item Suppose that $\forall$ picks a non zero element $a\in \A$. $\exists$ chooses an atom $a^-\in \A^{+}$
with $a^-\leq a$. She chooses new nodes $x_0,\ldots x_{n-1}$ with
$x_i=x_j$ iff $a^-\leq \d_{ij}$.
\begin{description}
\item[If $\A\in PTA_n$.]
She creates two new relativized networks $N_{t+1}$, $M_{t+1}$ with nodes those of $N_t$ plus $x_0,\ldots x_{n-1}$ and hyperedges those of $N_t$ together with $NSQ_{\bar{x}}$. The new hyper labels in
$M_{t+1}$ are defined as follows:
$$M_{t+1}=M_{t}\cup PT^{(a^-)}_{\bar{x}}.$$By Lemma \ref{mynetw} it follows that $M_{t+1}$ is an atomic $\A^+$ network.
Labels in $N_{t+1}$ are given by
\begin{itemize}
\item
$N_{t+1}(\bar{x})=a\cdot \prod_{i,j: x_i=x_j} \d_{ij}$.
\item $N_{t+1}(\bar{y})=\prod_{i,j:y_i=y_j}\d_{ij}$ for any other hyperedge $\bar{y}$.
\end{itemize}
\item[If $\A\in TEA_n$.]
She creates two new relativized networks $N_{t+1}$, $M_{t+1}$ with nodes those of $N_t$ plus $x_0,\ldots x_{n-1}$ and hyperedges those of $N_t$ together with $Q_{\bar{x}}$. The new hyper labels in
$M_{t+1}$ are defined as follows:
$$M_{t+1}=M_{t}\cup T^{(a^-)}_{\bar{x}}.$$By Lemma \ref{mynetw} it follows that $M_{t+1}$ is an atomic $\A^+$ network.
Labels in $N_{t+1}$ are given by
\begin{itemize}

\item
$N_{t+1}(\tau|\bar{x})=\s_{\tau}a\cdot \prod_{i,j: x_i=x_j} {\sf d}_{\tau i\tau j}$ for any finite permutation $\tau$.
\item $N_{t+1}(\bar{y})=\prod_{i,j:y_i=y_j}{\sf d}_{ij}$ for any other hyperedge $\bar{y}$.
\end{itemize}
\end{description}
$\exists$ responds to $\forall$'s move in round $t$ with $N_{t+1}$. One can check that $N_{t}\subseteq N_{t+1}\subseteq M_{t+1}$, as required.

\item If $\forall $ picks an edge $\bar{x}$ of $N_t$ and an element $a\in \A$, $\exists$ lets $M_{t+1}=M_t$ and lets
$N_{t+1}$ be the same as $N_t$ except that\begin{description}
\item[If $\A\in PTA_n$.] $N_{t+1}(\bar{x})=N_t(\bar{x})\cdot a$ if $M_t(\bar{x})\leq a$
and
$N_{t+1}(\bar{x})=N_t(\bar{x}).-a$ otherwise. Because $M_t(\bar{x})$ is an atom in $\A^+$, it follows that if $M_t(\bar{x})\nleq a$, then $M_t(\bar{x})\leq -a$, so this is satisfactory.
\item[If $\A\in TEA_n$.]  for every finite permutation $\tau$, $N_{t+1}(\tau|\bar{x})=N_t(\tau|\bar{x})\cdot \s_{\tau}a$ if $M_t(\tau|\bar{x})\leq \s_{\tau}a$
and
$N_{t+1}(\tau|\bar{x})=N_t(\tau|\bar{x}).-\s_{\tau}a$ otherwise.
Because $M_t(\tau|\bar{x})$ is an atom in $\A^+$ for every finite permutation $\tau$, it follows that if $M_t(\tau|\bar{x})\nleq \s_{\tau}a$, then $M_t(\tau|\bar{x})\leq -\s_{\tau}a$, so this is satisfactory.
\end{description}
\item Alternatively $\forall$ picks $\bar{x}\in N_t$ $i<n$ and $b\in \A$ such that $N_t(\bar{x})\leq {\sf c}_ib$.
Let $M_t(\bar{x})=a^-$. If there is $z\in M_t$ with $M_t(\bar{x}_z^i)\leq b$ then we are done.
In more detail, $\exists$ lets $M_{t+1}=M_t$ and define $N_{t+1}$ accordingly.
Else, there is no such $z$. We have $\c_i a^-\cdot b\not=0$ (inside $\A^+$), indeed
\begin{eqnarray*}
\c_i(\c_i a^-\cdot b)&=&\c_i a^-\cdot \c_i b\text{  }\text{  }\text{  }\text{  }\text{  }\text{  }\text{  }\text{  }\text{  }\text{  }\text{  }\\
&=&\c_i(a^-\cdot \c_i b)\text{  }\text{  }\text{  }\text{  }\text{  }\text{  }\text{  }\text{  }\text{  }\\
&=&\c_i a^-\not=0.\text{  }\text{  }\text{  }\text{  }\text{  }\text{  }(\text{since }a^-\leq\c_i b)
\end{eqnarray*}
Choose an atom $b^-\in\A^+$ with $b^-\leq\c_i a^-\cdot b$. Then we have $b^-\leq b$ and $b^-\leq\c_i a^-$. But by Lemma \ref{soatom} ($iv$) we also have $a^-\leq\c_i b^-$.
\begin{description}
\item[If $\A\in PTA_n$.]Let $G$ be the $\A^+$ network with nodes $\{x_0,\ldots x_{n-1}, z\}$, $z$ a new node,
and the hyperedges are the sequences in $NSQ_{\bar{x}}\cup NSQ_{\bar{t}}$ and $G=PT^{(a^-)}_{\bar{x}}\cup PT^{(b^-)}_{\bar{t}}$
where $\bar{t}=\bar{x}^i_z$.
\item[If $\A\in TEA_n$.]Let $G$ be the $\A^+$ network with nodes $\{x_0,\ldots x_{n-1}, z\}$, $z$ a new node,
and the hyperedges are the sequences in $Q_{\bar{x}}\cup Q_{\bar{t}}$ and $G=T^{(a^-)}_{\bar{x}}\cup T^{(b^-)}_{\bar{t}}$
where $\bar{t}=\bar{x}^i_z$.
\end{description} Again this is well defined. Then, it easy to check that,  $M_t(i_1,\ldots i_n)=G(i_1,\ldots i_n)$ for all $i_1,\ldots i_n\in Range \bar{x}$.
That is the subnetworks of $M_t$ and $G$ with nodes
$Range\bar{x}$ are isomorphic.
Then we can amalgamate $M_t$ and $G$ and define $M_{t+1}$ as the outcome.
The amalgamation here is possible, since the networks are only relativized,
we don't have all hyperedges. By Lemma \ref{mynetw}, one can check that $M_{t+1}$ as so defined is an atomic $\A^+$ network. Now define $N_{t+1}$ accordingly. That is $N_{t+1}$ has the same nodes and edges of $M_{t+1}$, with labelling as for $N_t$ except that
\begin{description}
\item[If $\A\in PTA_n$.]
$N_{t+1}(\bar{t})=b$. The rest of the other labels are  defined to be $$N_{t+1}(\bar{y})=\prod_{i,j:y_i=y_{j}}\d_{ij}.$$
\item[If $\A\in TEA_n$.]
$N_{t+1}(\tau|\bar{t})=\s_{\tau}b$ for any finite permautation $\tau$. The rest of the other labels are  defined to be $$N_{t+1}(\bar{y})=\prod_{i,j:y_i=y_{j}}{\sf d}_{ij}.$$
\end{description}
By Lemma \ref{soatom}, it is clear that $N_{t}\subseteq N_{t+1}\subseteq M_{t+1}$ and then $N_{t+1}$ is $\A$ network. Hence $N_{t+1}$ is appropriate to be played by $\exists$ in response to $\forall$'s move.
\end{enumerate}
\end{proof}
\section{Main results}
\begin{thm}\label{11}\
 Let $2\leq n<\omega$. If $\A\in PTA_{n}$, then $\A\in ID_{n}$.
 \end{thm}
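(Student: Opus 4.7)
The plan is to upgrade $\exists$'s winning strategy for $G_\omega(\A)$, established in the preceding lemma, into an honest representation of $\A$ in $D_n$. Fix $\A \in PTA_n$ and schedule an $\omega$-round play in which $\forall$ is \emph{fair}: by a standard diagonal enumeration, arrange that across the play he issues every non-zero $a \in \A$ as a type (a)-move, every pair $(\bar{x},a)$ with $\bar{x}$ an edge of some $N_t$ and $a \in \A$ as a type (b)-move, and every triple $(\bar{x}, i, b)$ satisfying $N_t(\bar{x}) \le \c_i b$ at some finite stage as a type (c)-move. Since $\exists$ wins by the previous lemma, the chain $N_0 \subseteq N_1 \subseteq \cdots$ produced in this play has a union $N = \bigcup_{t<\omega} N_t$ that is itself a network.

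Set $V := \mathrm{edges}(N)$ and define $h : \A \to \wp(V)$ by $h(a) = \{\bar{x} \in V : N(\bar{x}) \le a\}$. Fairness of the type (a)-moves guarantees $h(a) \neq \emptyset$ whenever $a \neq 0$, so $h$ is injective. The type (b)-moves ensure that for each edge $\bar{x}$ and each $a \in \A$ the label $N(\bar{x})$ lies below $a$ or below $-a$; this, together with monotonicity, makes $h$ a Boolean homomorphism with $h(-a) = V \setminus h(a)$. Clause (a) of the network definition gives $h(\d_{ij}) = \D^V_{ij}$ directly. For cylindrifiers, the inclusion $\C^V_i h(b) \subseteq h(\c_i b)$ is immediate from clause (b) of the network definition; the reverse inclusion uses the type (c)-moves, for if $\bar{x} \in h(\c_i b)$ then at the stage where $(\bar{x}, i, b)$ was played $\exists$ introduced a witness $z$ with $N(\bar{x}^i_z) = b$, hence $\bar{x}^i_z \in h(b)$ and $\bar{x} \in \C^V_i h(b)$.

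It remains to check that the image $h(\A)$ sits in $D_n$, i.e., that $\S^V_{ij} V = V$ for all $i,j < n$. Unwinding the definition of $\S^V_{ij}$, this reduces to the requirement that for every $\bar{y} \in V$ the tuple $\bar{y}^i_{y_j}$ is again in $V$; but the pre-network definition forces $\mathrm{edges}(N_t)$ to be closed under the substitutions $f \mapsto f^i_{f(j)}$ at every stage $t$, and this closure passes to the union $V$. Therefore $h$ embeds $\A$ into a $D_n$-algebra, so $\A \in ID_n$.

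The only non-routine point I anticipate is the scheduling: because both the pool of candidate type (b)-moves and the pool of candidate type (c)-moves keep growing as new edges are introduced in later rounds, one needs a proper diagonal argument (enumerate all currently live candidates at each stage and process them round-robin) to ensure that every candidate is eventually played. Once this bookkeeping is in place, the verification above is the standard translation from game-winning strategies to representations in the Hirsch--Hodkinson framework.
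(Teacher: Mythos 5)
Your proposal is correct and follows essentially the same route as the paper: run the game with $\exists$'s winning strategy from the preceding lemma against a fair (exhaustive) $\forall$, take the limit of the play, and read off the embedding $h$ into $\wp(edges(N))\in D_n$, with the type (a)/(b)/(c) moves giving injectivity, Boolean preservation and cylindrifier preservation, and the pre-network closure of edges under $f\mapsto f^i_{f(j)}$ giving the $D_n$ condition on the unit. The only cosmetic difference is that the paper defines $h(a)$ by the existence of a stage $t$ with $N_t(\bar{x})\le a$ (since labels get refined along the play and need not stabilize), which is how your condition $N(\bar{x})\le a$ on the limit should be read.
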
\begin{proof}
 Let $\A\in PTA_n$. We want to build an isomorphism from $\A$ to some $\B\in D_{n}$. Consider a play
$N_0\subseteq N_1\subseteq \ldots $ of $G_{\omega}(\A)$ in which $\exists$ plays as in the previous lemma
and $\forall$ plays every possible legal move.
The outcome of the play is essentially a relativized representation of $\A$ defined as follows.
Let $N=\bigcup_{t<\omega}nodes(N_t)$, and $edges(N)=\bigcup_{t<\omega}edges(N_t)\subseteq {}^nN$. By the definition of the networks, $\wp(edges(N))\in D_{n}$. We make $N$ into a representation by
defining $h:\A\rightarrow\wp(edges(N))$ as follows
$$h(a)=\{\bar{x}\in edges(N): \exists t<\omega(\bar{x}\in N_t \& N_t(\bar{x})\leq a)\}.$$
$\forall$-moves of the second kind guarantee that for any $n$-tuple $\bar{x}$ and any $a\in\A$, for sufficiently large $t$ we have either $N_{t}(\bar{x})\leq a$ or $N_{t}(\bar{x})\leq -a$. This ensures that $h$ preserves the boolean operations. $\forall$-moves of the third kind ensure that the cylindrifications are respected by $h$. Preserving diagonals follows from the definition of networks. The first kind of $\forall$-moves tell us that $h$ is one-one. But the construction of the game under consideration ensures that $h$ is onto, too. In fact $\bar{h}$ is a representation from $\A$ onto $\B\in D_n$. This follows from the definition of networks.

 \end{proof}\begin{thm}\label{1} Let $2\leq n<\omega$. If $\A\in TEA_{n}$, then $\A\in IDpe_{n}$.
\end{thm}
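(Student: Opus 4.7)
The plan is to mirror the proof of Theorem \ref{11}, adapted to the polyadic equality setting. Fix $\A\in TEA_n$ and, as in the previous theorem, consider a play $N_0\subseteq N_1\subseteq\cdots$ of $G_\omega(\A)$ in which $\exists$ follows her winning strategy from the preceding lemma while $\forall$ schedules every possible legal move (possible in $\omega$ rounds since there are only countably many legal moves to enumerate). Set $N=\bigcup_{t<\omega}nodes(N_t)$ and $V=\bigcup_{t<\omega}edges(N_t)\subseteq{}^n N$. By clause (c) of the definition of $TEA_n$ pre-networks and by the winning strategy, $V$ is closed under $\tau\mapsto\tau|\bar{x}$ for every finite transformation $\tau$; in particular, for any $i,j<n$ and $\bar{x}\in V$ we have $[i|j]|\bar{x}\in V$, so the full powerset $\wp(V)$ with the relativized operations belongs to $Dpe_n$ (not merely $Pers_n$).

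Next I define $h:\A\to\wp(V)$ by
$$h(a)=\{\bar{x}\in V:\exists t<\omega\ (\bar{x}\in N_t\text{ and }N_t(\bar{x})\leq a)\}.$$
As in Theorem \ref{11}, $\forall$-moves of type (b) guarantee that for each $\bar{x}\in V$ and each $a\in\A$, eventually either $N_t(\bar{x})\leq a$ or $N_t(\bar{x})\leq -a$, which yields preservation of the Boolean operations. $\forall$-moves of type (c) ensure preservation of cylindrifications. Clause (a) of the $TEA_n$-network definition (the diagonal clause) gives preservation of diagonals, and the cylindric identity $\s^i_jx=\c_i(\d_{ij}\cdot x)$ then forces preservation of the substitution operators $\s^i_j$.

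The new ingredient, absent from Theorem \ref{11}, is preservation of the transpositions $\s_{[i,j]}$. Here I would use clause (c) of the $TEA_n$ pre-network definition directly: every $N_t$ satisfies $N_t([i,j]|\bar{x})=\s_{[i,j]}N_t(\bar{x})$. Consequently $[i,j]|\bar{x}\in h(\s_{[i,j]}a)$ iff $N_t([i,j]|\bar{x})\leq\s_{[i,j]}a$ for some $t$ iff $\s_{[i,j]}N_t(\bar{x})\leq\s_{[i,j]}a$ iff $N_t(\bar{x})\leq a$ (using that $\s_{[i,j]}$ is a Boolean automorphism by $(F_6)$ and $(F_7)$) iff $\bar{x}\in h(a)$. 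Hence $h(\s_{[i,j]}a)={^V}\s_{[i,j]}h(a)$, as required.

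Finally, injectivity follows from $\forall$-moves of type (a): if $0\neq a\in\A$, then at some round $\forall$ plays $a$, and $\exists$'s response introduces an edge with label below $a$, so $h(a)\neq\emptyset$. Taking $\B$ to be the subalgebra of $\wp(V)$ generated by the image of $h$, we obtain $\B\in Dpe_n$ and $\A\cong\B$, giving $\A\in IDpe_n$. The main obstacle I anticipate is the bookkeeping that ensures the winning strategy of the preceding lemma genuinely produces a pre-network closed under all finite transformations in the $TEA_n$ case (so that $V$ itself is $[i|j]$-closed, placing the target in $Dpe_n$); this is built into the definition of transposition network $T^{(a)}_{\bar{x}}$ via $Q_{\bar{x}}$ and the permutation clause, but needs to be tracked carefully through both $\forall$-moves of type (a) and the amalgamation step in type (c).
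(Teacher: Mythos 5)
Your proposal is correct and follows essentially the same route as the paper's own proof: the same play of $G_{\omega}(\A)$ against $\exists$'s strategy, the same map $h$, Boolean/cylindrification/diagonal preservation exactly as in Theorem \ref{11}, and transpositions handled via clause (c) of the $TEA_n$-network definition together with $\s_{ij}$ being an involutive Boolean endomorphism, with $\s^i_j$ then coming for free from $\c_i$ and $\d_{ij}$. The only differences are cosmetic: you spell out the closure of the unit under finite transformations (which the paper dismisses with ``by the definition of the networks'') and you write $(F_6)$, $(F_7)$ where the $TEA_n$ axioms are labelled $(Fe_6)$, $(Fe_7)$.
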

\begin{proof}
Let $\A\in TEA_n$. As above consider a play
$N_0\subseteq N_1\subseteq \ldots $ of $G_{\omega}(\A)$ in which $\exists$ plays as in the previous lemma
and $\forall$ plays every possible legal move.
The outcome of the play is essentially a relativized representation of $\A$ defined as follows.
Let $N=\bigcup_{t<\omega}nodes(N_t)$, and $edges(N)=\bigcup_{t<\omega}edges(N_t)\subseteq {}^nN$. Again by the definition of the networks, it is easy to see that $\wp(edges(N))\in Dpe_{n}$. We make $N$ into a representation by
defining $h:\A\rightarrow\wp(edges(N))$ as follows
$$h(a)=\{\bar{x}\in edges(N): \exists t<\omega(\bar{x}\in N_t \& N_t(\bar{x})\leq a)\}.$$
As the previous theorem, $h$ preserves the boolean operations, the cylindrifictions and the diagonals. Now we check transpositions. Let $\bar{y}\in h(\s_{ij}a)$. Then there exists $t<\omega$ such that $N_{t}(\bar{y})\leq\s_{ij} a$. Hence $\s_{ij} N_{t}(\bar{y})=N_{t}([i,j]|\bar{y})\leq a$. The other inclusion is similar. The preservation of the substitutions follows directly from the preservation of the culindrifications and the diagonals.
\end{proof}

\begin{thm}\label{12} Let $2\leq n<\omega$. If $\A\in TA_{n}$, then $\A\in IDp_{n}$.
\end{thm}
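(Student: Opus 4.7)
The plan is to follow verbatim the template of Theorem \ref{1}, simplified by the absence of diagonal elements. Given $\A \in TA_n$, I would pass to the canonical extension $\A^+$, which is atomic and still satisfies the $TA_n$ axioms, and construct a representation of $\A$ as a subalgebra of some $\wp(W) \in Dp_n$ by playing the game $G_{\omega}(\A)$ and letting the limit of $\exists$'s responses provide the embedding.

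First, I would adapt the notion of relativized $\A$-network to the diagonal-free setting: a pre-network $N = (N_1,N_2)$ consists of a finite set of nodes $N_1$ together with a partial map $N_2 : N_1^n \to \A$ whose domain is closed under composition by finite transformations, and $N$ is a network when (a) $N(\bar{x}) \cdot \c_i N(\bar{y}) \neq 0$ whenever $\bar{x} \equiv_i \bar{y}$ are both edges, and (b) $N([i,j]|\bar{x}) = \s_{[i,j]} N(\bar{x})$ for all edges $\bar{x}$ and all $i,j < n$. The diagonal clause from the $TEA_n$ definition is dropped entirely; the game $G_{\omega}(\A)$ retains the same three kinds of $\forall$-moves. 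The transposition network $T^{(a)}_{\bar{x}}$ is defined, for any atom $a \in \At \A^+$ and any $n$-tuple $\bar{x}$ of pairwise distinct nodes, by choosing representative tuples of the form $\tr|\bar{x}$ for $\sim$-classes, setting $T^{(a)}_{\bar{x}}(\tr|\bar{x}) = (\s^{j_0}_{i_0}\cdots \s^{j_k}_{i_k})^{\A^+}a$ on representatives, and extending by $T^{(a)}_{\bar{x}}(\sigma|\bar{y}) = \s_{\sigma}T^{(a)}_{\bar{x}}(\bar{y})$ for finite permutations $\sigma$.

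With these in place, $\exists$'s winning strategy transcribes directly from the $TEA_n$ branch of the preceding lemma, now with no diagonal constraints to track, and the winning play yields a map $h : \A \to \wp(\mathrm{edges}(N))$, where $N = \bigcup_t \mathrm{nodes}(N_t)$ and $h(a) = \{\bar{x} : \exists t,\ \bar{x} \in N_t \text{ and } N_t(\bar{x}) \leq a\}$. The three kinds of $\forall$-moves give injectivity and preservation of booleans and cylindrifications exactly as in Theorem \ref{1}, while condition (b) together with $\exists$'s labelling convention yields preservation of the substitutions $\s_{[i|j]}$ and the transpositions $\s_{[i,j]}$. Since $\mathrm{edges}(N)$ is closed under finite transformations by construction, $\s_{[i|j]}\mathrm{edges}(N) = \mathrm{edges}(N)$ and the target algebra lies in $Dp_n$ rather than merely $Prs_n$.

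The main obstacle, and the only point in the $TEA_n$ proof where diagonals genuinely intervened, is the well-definedness of $T^{(a)}_{\bar{x}}$: Lemma \ref{lesa} appeals to $\d_{ij}$ directly, and the internal claim showing $\s_{\sigma}T^{(a)}_{\bar{x}}(\bar{y}) = \s_{\tau}T^{(a)}_{\bar{x}}(\bar{y})$ when $\sigma|\bar{y} = \tau|\bar{y}$ relied on it. The diagonal-free substitute I would use exploits the fact that $\bar{x}$ has pairwise distinct entries: if $[i,j]|\tr|\bar{x} = \tr|\bar{x}$, then the underlying finite transformations $[i,j]|[i_0|j_0]|\cdots|[i_k|j_k]$ and $[i_0|j_0]|\cdots|[i_k|j_k]$ agree as functions on $n$, hence the operators $\s_{[i,j]}(\s^{j_0}_{i_0}\cdots \s^{j_k}_{i_k})^{\A^+}$ and $(\s^{j_0}_{i_0}\cdots \s^{j_k}_{i_k})^{\A^+}$ coincide by the composition identity $\s_{\tau|\lambda} = \s_{\tau}\s_{\lambda}$ recorded in the preamble. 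This disposes of the inner claim without invoking diagonals, and the remainder of the proof of Theorem \ref{1} transcribes unchanged.
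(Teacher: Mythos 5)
Your treatment of well-definedness (using that $\bar{x}$ is injective, so $\sigma|\bar{x}=\tau|\bar{x}$ forces $\sigma=\tau$, and then the law $\s_{\tau|\lambda}=\s_{\tau}\s_{\lambda}$) is fine as far as it goes, but the proposal breaks at an earlier, more essential point: the labels you assign to the non-injective tuples are not atoms, so the machinery of the strategy lemma does not transcribe. In the $PTA_n$/$TEA_n$ argument the labels are computed with the term $\t^{i}_{j}x=\d_{ij}\cdot\c_i x$, and Lemma \ref{lem} (Andr\'eka's MGR lemma) is exactly what guarantees that these terms carry atoms to atoms and do so coherently; atomicity of every $M_t$ is then used essentially by $\exists$ (in move (2): ``since $M_t(\bar{x})$ is an atom, $M_t(\bar{x})\nleq a$ implies $M_t(\bar{x})\leq -a$''; in move (3) when the atom $b^-\leq\c_ia^-\cdot b$ is chosen and Lemma \ref{soatom} is applied to atoms), and Lemma \ref{mynetw} rests on it. In $TA_n$ the term $\t^i_j$ is not available, and your replacement $(\s^{j_0}_{i_0}\cdots\s^{j_k}_{i_k})^{\A^+}a$ does not do the same job: by $(F_6)$ the operation $\s^i_j$ is a Boolean endomorphism, so in the intended set algebras it acts as an inverse-image substitution, sending an atom either to $0$ or to an element containing a whole $\c_i$-cylinder --- it is not atom-preserving, already in $\wp(\V)$ for $\V$ a unit of an algebra in $Dp_n$. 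Hence your $T^{(a)}_{\bar{x}}$ is not an atomic network, and the moves (2) and (3) of the strategy collapse. Producing an atom-valued, coherent labelling of the non-injective tuples from a single atom is essentially what the diagonal elements are for, and that is precisely what $TA_n$ lacks. A second, independent gap: even granting the networks, nothing in your conditions forces the limit map $h$ to respect $\s^i_j$ for $i\neq j$. In Theorem \ref{1} this comes for free from $\s^i_jx=\c_i(\d_{ij}\cdot x)$, i.e. from preservation of cylindrifications and diagonals; your condition (b) governs only the transpositions, and no clause of your game relates $N(\bar{y})$ to $N([i|j]|\bar{y})$, so the assertion that ``condition (b) together with the labelling convention'' yields preservation of the substitutions is unsupported.

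The paper proves the theorem by a different and much shorter route that sidesteps both problems: since $TA_n$ is axiomatized by positive equations it is canonical; in the canonical extension $\B$ one defines $\d_{ij}=\prod\{y\in B:\s^i_jy=1\}$ and checks, as in Sain--Thompson (Proposition 9 of \cite{st}, using only $(F_3)$, $(F_4)$, $(F_6)$), that $\B$ with these constants is in $TEA_n$; then Theorem \ref{1} applies to $\B$ and $\A$ embeds into the appropriate reduct, landing in $IDp_n$. If you insist on a direct game-theoretic proof you would first have to manufacture diagonal-like elements (or an atom-preserving substitute for $\t^i_j$) in $\A^+$ anyway, at which point you have reproduced the paper's reduction; so either adopt that reduction, or substantially redesign the networks (e.g. restricting edges to injective tuples and adding a clause tying $N(\bar{y})$ to the labels needed to verify $\s^i_j$), neither of which is covered by ``the remainder transcribes unchanged.''
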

\begin{proof}
It is enough to prove that every $TA_\alpha$ is embeddable in a reduct of some $TEA_\alpha$. For, use the same method in the proof of Proposition 9. in \cite{st}, it uses only axioms $(F_3)$, $(F_4)$ and $(F_6)$. This method depend on the fact that $\A$ is definable by positive equations only. So $\A$ is canonical, and we can define diagonals in this canonical extension, $\B$ say, by $\d_{ij}=\bigcap\{y\in B:\s^i_j y=1\}$, for every $i,j\in\alpha$. Then it can be shown that $B$ with those constants is in $TEA_{\alpha}$.
\end{proof}

\begin{thm}\label{13} Let $2\leq n<\omega$. If $\A\in SA_{n}$, then $\A\in IDs_{n}$.
\end{thm}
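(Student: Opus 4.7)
The plan is to reduce this theorem to Theorem \ref{11} in the same spirit that Theorem \ref{12} is reduced to Theorem \ref{1}. Given $\A\in SA_n$, I would show that $\A$ embeds into the substitution reduct of some $\B\in PTA_n$. Once such an embedding is produced, Theorem \ref{11} provides a representation of $\B$ as an algebra $\B'\in D_n$; forgetting the diagonal constants on both sides then embeds $\A$ into the substitution reduct of $\B'$, which (since $\B'\in D_n$ satisfies ${^{V}\S^{i}_{j}}V=V$) lives in $Ds_n$. Hence $\A\in IDs_n$.

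The embedding into a reduct of $PTA_n$ is produced along the lines of the proof of Theorem \ref{12}. I would first pass to the canonical extension $\B$ of $\A$, which is complete and atomic and, by the positive--equation character of the $SA_n$-axioms together with the boolean endomorphism property $(S_6)$ (as used in \cite[Prop. 9]{st}), still lies in $SA_n$. Mirroring the formula from Theorem \ref{12}, I define diagonals in $\B$ by $\d_{ii}=1$ and, for $i\neq j$,
\[
\d_{ij}=\prod\{y\in B:\s^{i}_{j}y=1\}.
\]
The crucial compatibility to verify is that with these new constants one has $\s^{i}_{j}x=\c_i(\d_{ij}\cdot x)$ for every $x\in B$ and all $i\neq j$, so that the derived substitution of the enriched algebra agrees with the original $\s^{i}_{j}$ inherited from $\A$. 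This is what makes the canonical embedding $\A\hookrightarrow \B$ an embedding of $SA_n$-algebras into the substitution reduct of the enriched $\B$.

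It then remains to check that $\B$ together with these defined diagonals satisfies the axioms of $PTA_n$. Boolean axioms $(C_0)$--$(C_3)$ and the extensiveness/additivity of $\c_i$ are inherited from $SA_n$; the merry-go-round axiom of $PTA_n$ is literally $(S_8)$; $(C_5)$ is the definition; and the weakened commutativity $(C_4)^*$, together with the diagonal axioms $(C_6)$ and $(C_7)$, must be extracted from $(S_3)$--$(S_7)$ and the defining formula for $\d_{ij}$. This last verification is the main obstacle of the proof: the key boolean-algebraic manipulations will use $(S_6)$ to rewrite meets of fixed points of $\s^{i}_{j}(\cdot)=1$ in terms of cylindrifications, while $(S_5)^*$ and $(S_7)$ govern how the defined diagonals interact among themselves; canonicity then transports the verifications from $\A$ to $\B$. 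Once all $PTA_n$-axioms are established, Theorem \ref{11} closes the argument as outlined above, producing $\A\in IDs_n$.
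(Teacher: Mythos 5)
Your proposal follows the paper's own proof of Theorem \ref{13} essentially verbatim: the paper likewise uses the positive-equation form of the $SA_n$ axioms to embed $\A$ into a complete atomic $\B$, defines $\d_{ij}=\prod\{y\in B:\s^i_j y=1\}$, proves $\s^i_j\d_{ij}=1$ and the key compatibility $\s^i_j x=\c_i(\d_{ij}\cdot x)$ via complete distributivity of $\s^i_j$ over meets, and then checks the diagonal axioms so that the enriched algebra is a $PTA_n$ (with MGR supplied by $(S_8)$), after which Theorem \ref{11} finishes the argument. The verifications you defer are exactly the computations the paper carries out, so the approach is the same.
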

\begin{proof}
We will use the same analogue in the proof of Proposition 9. in \cite{st}. Now, let $2\leq n<\omega$ and $\A\in SA_{n}$. Then $\A$ is definable by positive equations only. Indeed, There is an axiomatization of Boolean algebra involving only meets and joins, so a Boolean homomorphism is specified by respecting meets and joins. By this we get rid of negation. Therefore by (I) in \cite[p. 440]{hmt1}, $\A$ is a subalgebra of a complete and atomic $\B$ such that $\B\models(C_0 - C_7)$. Let $\d_{ij}=\bigcap\{y\in B:\s^i_j y=1\}$, for every $i,j\in\alpha$. This definition is justified because $\B$ is complete. Our aim is to prove that $\B$ with this constants satisfies the axioms $C_0-C_7$ and this finishes the prove. For, it is enough to prove that $\B\models(C_5-C_7)$.
\begin{cl}
For every $i,j\in\alpha$ and every set $K$, $\s^i_j(\bigcap_{k\in K}y_k)=\bigcap_{k\in K}\s_j^i y_k$.
\end{cl}
\begin{proof}
See \cite[Claim 9.1.]{st}
\end{proof}
\begin{cl}
For every $i,j\in\alpha$, $\B\models\s^i_j\d_{ij}=1$.
\end{cl}
\begin{proof}Let $i,j\in\alpha$,
\begin{eqnarray*}
\B\models \text{ }\text{ }\text{ }\text{ }\text{ }\text{ }\s^i_j\d_{ij}&=&\s^i_j\{y\in B: \s^i_j y=1\}\\
&=&\{\s^i_j y : y\in B \text{ and } \s^i_j y=1\}\\
&=&1.
\end{eqnarray*}
\end{proof}
\begin{cl}
For every $i,j\in\alpha$ and every $x\in B$, if $i\neq j$ then $\B\models\s^i_jx=\c_i(x\cdot\d_{ij})$.
\end{cl}
\begin{proof}
Let $y\in B$ be such that $\s^i_j y=1$.
\begin{eqnarray*}
\B\models \text{ }\text{ }\text{ }\text{ }\text{ }\text{ }\text{ }\text{ } \s^i_j[-(x\cdot y)+\s^i_j x]&=& -(\s^i_j x\cdot \s^i_j y)+\s^i_j\s^i_j x\\
&=& -\s^i_j x+\s^i_j x\\
&=& 1,
\end{eqnarray*}
therefore, $\B\models \text{ }\text{ }\text{ }\text{ }x\cdot y\leq \s^i_j x$.
Hence, \begin{eqnarray*}
\B\models \text{ }\text{ }\text{ }\text{ }\text{ } \bigcap\{x\cdot y:y\in B\text{ and }\s^i_j y=1\}&\leq& \s^i_j x\\
\B\models \text{ }\text{ }\text{ }\text{ }\text{ } x\cdot\d_{ij}\leq\s^i_j x,
\end{eqnarray*}
i.e., $\B\models \text{ }\text{ }\text{ }\text{ }\text{ }\c_i(x\cdot\d_{ij})\leq\s^i_j x$.
On the other direction, \begin{eqnarray*}
\B\models \text{ }\text{ }\text{ }\text{ }\text{ } x\cdot\d_{ij}&\leq&\c_i(x\cdot\d_{ij})\\
\B\models \text{ }\text{ }\text{ }\text{ }\text{ } \s^i_j(x\cdot\d_{ij})&\leq&\s^i_j(\c_i(x\cdot\d_{ij}))\\
\B\models \text{ }\text{ }\text{ }\text{ }\text{ } \s^i_j x\cdot\s^i_j\d_{ij}&\leq&\c_i(x\cdot\d_{ij})\\
\B\models \text{ }\text{ }\text{ }\text{ }\text{ } \s^i_j x&\leq&\c_i(x\cdot\d_{ij}).
\end{eqnarray*}
\end{proof}
Now, \begin{eqnarray*}
\B\models\d_{ij}&=&\{y\in A: \s^i_i y=1\}\\
&=&\{y\in A: y=1\}\\
&=& 1.
\end{eqnarray*}

\begin{eqnarray*}
\B\models&&\s^i_j[-(\d_{ij}\cdot\c_i(\d_{ij}\cdot x))+x]\\
&=& -(\s^i_j\d_{ij}\cdot\s^i_j\c_i(\d_{ij}\cdot x))+\s^i_j x\\
&=& -\c_i(\d_{ij}\cdot x)+\s^i_j x\\
&\geq& -\s^i_j x+\s^i_j x\\
&=&1.
\end{eqnarray*}
Therefore, $\B\models \d_{ij}\cdot\c_i(\d_{ij}\cdot x)\leq x$.
 
\begin{eqnarray*}
\B\models&&\s^i_j[-\c_k(\d_{ik}\cdot\d_{kj})+\d_{ij}]\\
&=& \s^i_j[-\s^k_i\d_{kj}+\d_{ij}]\\
&=&-\s^i_j\s^k_i\d_{kj}+\s^i_j\d_{ij}\\
&=& -\s^i_j\s^k_j\d_{kj}+1\\
&=&1.
\end{eqnarray*}
Therefore, $\B\models \c_k(\d_{ik}\cdot\d_{kj})\leq\d_{ij}$.
Hence we proved that $\B\models (C_0-C_7)$ and also $\B\models\s^i_j x=\c_{i}(\d_{ij}\cdot x)$. This finishes the prove.

\end{proof}
We have proved our theorems for finite dimensions. Now we turn to the
infinite dimensional case. We give a general method of lifting
representability for finite dimensional cases to the transfinite; that
can well work in other contexts.
\begin{thm}\label{2} Assume that $\alpha\geq \omega$. Then $PTA_{\alpha}=ID_{\alpha}$, $TEA_{\alpha}=IDpe_{\alpha}$, $TA_{\alpha}=IDp_{\alpha}$ and $SA_{\alpha}=IDs_{\alpha}$.
\end{thm}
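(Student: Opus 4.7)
My plan is to deduce the infinite-dimensional statements from the finite-dimensional Theorems~\ref{11}--\ref{13} by a uniform lifting argument that exploits the fact that every defining axiom of $PTA_\alpha$, $TEA_\alpha$, $TA_\alpha$ and $SA_\alpha$ mentions only finitely many indices.

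Let $\A$ belong to one of the four $\alpha$-dimensional varieties with $\alpha \geq \omega$. The first step is to observe that for each finite $I \subseteq \alpha$, the reduct obtained by forgetting the operations $\c_i$, $\s^i_j$, $\s_{ij}$ and $\d_{ij}$ with indices outside $I$, followed by a bijection $I \cong \{0, \ldots, |I|-1\}$, yields an algebra $\A_I$ in the corresponding $|I|$-dimensional variety; this is immediate from inspecting the axioms, since none involves more than a fixed finite number of indices. By Theorem~\ref{11}, \ref{1}, \ref{12} or \ref{13} (depending on which of the four varieties $\A$ sits in), I then obtain an embedding $h_I : \A_I \hookrightarrow \B_I$ into a set algebra $\B_I$ of dimension $|I|$ in the appropriate finite-dim representable class.

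Next I would splice the family $\{h_I\}_{I}$ into a single $\alpha$-dimensional representation of $\A$ via an ultraproduct over the directed family $\mathcal{F} = \{I \subseteq \alpha : I \text{ finite}\}$, equipped with an ultrafilter $U$ containing every upward cone $\{I \in \mathcal{F} : J \subseteq I\}$. I would dilate each $\B_I$ canonically to an $\alpha$-dimensional set algebra $\widehat{\B_I}$ by replacing its unit $\V_I \subseteq {}^{|I|}U_I$ with the set of $\alpha$-sequences whose restriction to $I$ lies in $\V_I$, and interpreting the new operations $\c_k$, $\s^k_\ell$, $\s_{k\ell}$ and $\d_{k\ell}$ for indices outside $I$ as the natural set-theoretic action, which is trivial on the original generators. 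The composite map $a \mapsto [(h_I(a))_{I \in \mathcal{F}}]/U$ would then embed $\A$ into $\prod_U \widehat{\B_I}$: it preserves every operation because any single operation involves only finitely many indices and is therefore computed correctly in $\A_I$ for every $I$ in the $U$-large cone above those indices, and it is injective because for $0 \neq a \in \A$ each $h_I(a)$ is non-zero, so is the resulting ultraproduct class.

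The hard part will be the dilation step: verifying that $\widehat{\B_I}$ genuinely lies in the target infinite-dim variety of set algebras ($D_\alpha$, $Dpe_\alpha$, $Dp_\alpha$ or $Ds_\alpha$) and that ultraproducts of such set algebras lie there too. Both checks ought to be routine once the construction is pinned down carefully: the defining conditions of $D_\alpha$ and its variants are first-order statements about the unit and the set-theoretic operations, hence preserved by ultraproducts, and the triviality of the extra action on padded coordinates is compatible with every axiom precisely because the axioms are local to finite index subsets. The argument is then uniform across all four theorems, with only the invocation of the appropriate finite-dim representation theorem varying, and this is the promised general lifting method.
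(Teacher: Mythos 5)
Your overall skeleton---restrict to finite index sets, apply the finite-dimensional Theorems \ref{11}--\ref{13}, and glue via an ultraproduct over an ultrafilter containing all cones above finite sets, with the diagonal-style map---is exactly the strategy of the paper (which indexes by finite one-to-one sequences $\rho$ and embeds via $f(a)_{\rho}=a$). The genuine gap is the step you yourself defer as ``routine'': the dilation of $\B_I$ to an $\alpha$-dimensional set algebra. The unit you propose, $\widehat{\V}=\{s\in{}^{\alpha}U_I: s\upharpoonright I\in\V_I\}$, need not satisfy the defining condition ${\S}^{i}_{j}\widehat{\V}=\widehat{\V}$ of $D_{\alpha}$ (equivalently, closure of the unit under $s\mapsto s^{i}_{s_j}$) when $i\in I$ and $j\notin I$: there $s_j$ is an \emph{arbitrary} element of the base, whereas a $D_{|I|}$ unit is only closed under substituting values of its own coordinates, not arbitrary base elements. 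Concretely, with $|I|=2$, base $\{0,1\}$ and $\V_I=\{(0,0),(1,1),(0,1)\}$, one checks $\S^{i}_{j}\V_I=\V_I$ for all $i,j<2$; yet $s=(0,0,1,1,\dots)\in\widehat{\V}$ while $s^{0}_{s_2}=(1,0,1,1,\dots)\notin\widehat{\V}$, since $(1,0)\notin\V_I$. The same phenomenon occurs in the $Dpe/Dp/Ds$ cases (take $\V_I={}^{2}\{0,1\}\cup{}^{2}\{2\}$ and $s=(2,2,0,\dots)$). So $\widehat{\B_I}$ as constructed lies outside the target class, and the observation that the axioms of the abstract varieties are ``local to finite index sets'' does not help: membership in $D_{\alpha}$ and its variants is a condition on the unit quantified over \emph{all} pairs $i,j<\alpha$ simultaneously, and it is precisely the mixed pairs that break the naive cylinder.

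This is exactly the point the paper does not redo by hand: it cites the dilation lemma $ID_{n}\subseteq S\mathfrak{Rd}^{\rho}ID_{\alpha}$ (the analogue of \cite[Theorem 3.1.121]{hmt2}), which yields for each $\rho$ some $\B_{\rho}\in ID_{\alpha}$ with $\mathfrak{Rd}^{\rho}\A\subseteq\mathfrak{Rd}^{\rho}\B_{\rho}$, together with closure of $ID_{\alpha}$ under ultraproducts (the analogue of \cite[Lemma 3.1.90]{hmt2}); with these two ingredients the ultrafilter/diagonal part of your argument goes through essentially verbatim. To repair your version you must either invoke these facts as the paper does, or prove the dilation correctly---for the $Gw$-type units this is done subunit-by-subunit, dilating each square ${}^{|I|}U_k$ to a weak space ${}^{\alpha}U_{k}^{(p_k)}$ rather than taking the full cylinder over the whole unit---and you should also justify ultraproduct-closure more carefully: an ultraproduct of set algebras is not literally a set algebra, so ``the conditions are first-order, hence preserved'' needs to be replaced by a pseudo-elementarity argument or a direct construction of a representation of the ultraproduct.
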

\begin{proof} We will consider the case of $PTA_{n}$ and the other cases are similar. First, we know that $PTA_{n}=ID_{n}$ for every finite $n<\omega$. We want to show that $PTA_{\alpha}=ID_{\alpha}$ for any infinite $\alpha$. First note that :
\begin{enumerate}
\item For any $\A\in PTA_{\alpha}$ and $\rho:n\rightarrow\alpha$, $n\in\omega$ and $\rho$ is one to one, define $\mathfrak{Rd}^{\rho}\A$ as in \cite[Definition 2.6.1]{hmt1}. Then $\mathfrak{Rd}^{\rho}\A\in PTA_{n}$.
\item For any $n\geq 2$ and $\rho:n\rightarrow\alpha$ as above, $ID_{n}\subseteq S\mathfrak{Rd}^{\rho}ID_{\alpha}$ as in \cite[Theorem 3.1.121]{hmt2}.
\item $ID_{\alpha}$ is closed under the ultraproducts, cf. \cite[Lemma 3.1.90]{hmt2}.
\end{enumerate}

Now we show that if $\A \in PTA_{\alpha}$, then $\A$ is representable. First, for any
$\rho:n\rightarrow\alpha$, $\mathfrak{Rd}^{\rho}\A\in PTA_{n}$. Hence $\mathfrak{Rd}^{\rho}\A$ is in $ID_n$ and so it is in $S\mathfrak{Rd}^{\rho}ID_{\alpha}$. Let
$J$ be the set of all finite one to one sequences with range in $\alpha$. For $\rho\in J$, let
$M_{\rho}=\{\sigma\in J: \rho\subseteq\sigma\}$. Let $U$ be an ultrafilter of $J$ such that $M_{\rho}\in U$ for
every $\rho\in J$. Then for $\rho\in J$, there is $\B_{\rho}\in ID_{\alpha}$ such that $\mathfrak{Rd}^{\rho}\A\subseteq \mathfrak{Rd}^{\rho}\B_{\rho}$. Let $\mathfrak{C}=\prod \B_{\rho}/U$; it is in $UpID_{\alpha}=D_{\alpha}$. Define $f:\A\rightarrow\prod\B_{\rho}$ by $f(a)_{\rho}=a$, and finally define $g:\A\rightarrow\mathfrak{C}$ by $g(a) = f(a)/U$. Then $g$ is an embedding.

\end{proof}
\begin{remark}Let $\A$ be an algebra in some class of our interest. 
$\A$ is said to be completely representable if there is a representation $f:\A\to P(V)$ such that $\bigcup \{f(x): x \text{ an atom}\} =V$.\footnote{A characterization of the completely representable algebras, cf, \cite[Lemma 2.1]{hod}} Therefore, according to the representations that are built in the proofs of Theorems \ref{11}, \ref{1}, \ref{12}, \ref{13} and \ref{2}, every atomic algebra in $PTA_{\alpha}\cup TEA_{\alpha}\cup TA_{\alpha}\cup SA_{\alpha}$ is completely representable.
\end{remark}
Here we compare our classes with other important classes existing in the literature.
Given a set $U$ and a mapping $p\in{^{\alpha}U}$, then the set $${^{\alpha}U^{(p)}}=\{x\in {^{\alpha}U}: x \text{ and }p \text{ are different only in finitely many places}\}$$ is called the weak space determined by $p$ and $U$.
\begin{defn}
\begin{description}
\item[Class $Gw_{\alpha}$] A set algebra in $Crs_{\alpha}$ is called a generalized weak cylindric relativized set algebra if there are sets $U_k$, $k\in K$, and mappings $p_k\in{^{\alpha}U_k}$ such that $\V=\bigcup_{k\in K}{^{\alpha}U^{(p_k)}_{k}}$, where $\V$ is the unit.
\item[Class $Gwp_{\alpha}$ ($Gwpe_{\alpha}$)] A set algebra in $Prs_{\alpha}$ ($Pers_{\alpha}$) is called a generalized weak polyadic (equality) relativized set algebra if there are sets $U_k$, $k\in K$, and mappings $p_k\in{^{\alpha}U_k}$ such that $\V=\bigcup_{k\in K}{^{\alpha}U^{(p_k)}_{k}}$, where $\V$ is the unit.
\item[Class $Gws_{\alpha}$] A set algebra in $Srs_{\alpha}$ is called a generalized weak substitution relativized set algebra if there are sets $U_k$, $k\in K$, and mappings $p_k\in{^{\alpha}U_k}$ such that $\V=\bigcup_{k\in K}{^{\alpha}U^{(p_k)}_{k}}$, where $\V$ is the unit.
\end{description}
\end{defn}
A known characterization of the class $Gwpe_{\alpha}$ is : If $\V$ is the unit of an $\sA\in Prs_{\alpha}$, then $\sA\in Gwp_{\alpha}$ if and only if $y\in\V$ implies $\tau|y\in\V$ for every finite transformation $\tau$. Using this property one can prove that $Gwpe_{\alpha}=Dpe_{\alpha}$, so we can replace $Dpe_{\alpha}$ by $Gwpe_{\alpha}$ in Theorem \ref{1} and Theorem \ref{2}. But the same is not true for the other types, for example, for finite $n\in\omega$, the class $Gw_{\alpha}$ coincide with the class of locally square cylindric algebras. Andreka gave a finite schema axiomatization for the former class in \cite{gn}, and Andreka's axioms and $PTA_{\alpha}$ are not definitionally equivalent.

\section{All varieties considered have the superamalgamation property}

\begin{defn} Let $K$ be a class of algebras having a boolean reduct. 
$\A_0\in K$ is in the amalgamation base of $K$ if for all $\A_1, \A_2\in K$ and monomorphisms $i_1:\A_0\to \A_1,$ $i_2:\A_0\to \A_2$ 
there exist $\D\in K$
and monomorphisms $m_1:\A_1\to \D$ and $m_2:\A_2\to \D$ such that $m_1\circ i_1=m_2\circ i_2$. 
If in addition, $(\forall x\in A_j)(\forall y\in A_k)
(m_j(x)\leq m_k(y)\implies (\exists z\in A_0)(x\leq i_j(z)\land i_k(z) \leq y))$
where $\{j,k\}=\{1,2\}$, then we say that $\A_0$ lies in the super amalgamation base of $K$. Here $\leq$ is the boolean order.
$K$ has the (super) amalgamation property $((SUP)AP)$, if the (super) amalgamation base of $K$ coincides with $K$.
\end{defn}

We now show using a result of Marx, that all varieties considered have the superamalgmation property $(SUPAP).$
We consider $TA_{\alpha}=ID_{\alpha}$ The rest of the cases are the same.
For a set $\V$, $B(\V)$ denotes the Boolean algebra $(\wp(\V), \cap, \sim).$ 

\begin{defn} 
\begin{enumerate}[1.]

\item A frame of type $TA_{\alpha}$ is a first order structure $$F=(\V, \C_i, \S_{i}^j, \S_{ij})_{i,j\in \alpha},$$ where $\V$ is an arbitrary set and
and $\C_i$, $\S_i^j$ and $\S_{ij}$ are binary relations for all $i, j\in \alpha$.

\item Given a frame $F$, its complex algebra denote by $F^+$ is the algebra $$(B(\V), \c_i, \s_i^j, \s_{ij})_{i,j},$$  where for $X\subseteq \V$, 
$\s_i^j(X)=\{\s\in \V: \exists t\in X, (t, s)\in \s_i^j \}$, and same for $\c_i$ and $\s_{ij}.$

\item Given $K\subseteq TA_{\alpha},$ then $Cm^{-1}K=\{F: F^+\in K\}.$

\item Given a family $(F_i)_{i\in I}$ a zigzag product of these frames is a substructure of $\prod_{i\in I}F_i$ such that the 
projection maps restricted to $\S$ are
 onto. 

\end{enumerate}

\end{defn}

\begin{thm}(Marx) 
Assume that $K$ is a canonical variety and $L=Cm^{-1}K$ is closed under finite zigzag products. Then $K$ has the superamalgamation 
property.
\end{thm}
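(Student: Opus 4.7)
The plan is to apply the standard duality between Boolean algebras with operators and relational frames. Given $\A_0, \A_1, \A_2 \in K$ with monomorphisms $i_j : \A_0 \to \A_j$ for $j = 1, 2$, I would first take the ultrafilter (canonical) frames $F_j$ of $\A_j$, so that $F_j^+$ is the canonical extension of $\A_j$. Canonicity of $K$ forces $F_j^+ \in K$, and therefore $F_j \in L = Cm^{-1}K$. Each injection $i_j$ dualizes to a surjective bounded morphism $i_j^* : F_j \to F_0$ given by $u \mapsto i_j^{-1}(u)$.

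Next I would form the zigzag amalgam $F \subseteq F_1 \times F_2$ with universe
\[ \{(u_1, u_2) \in F_1 \times F_2 : i_1^*(u_1) = i_2^*(u_2)\} \]
and relations inherited coordinatewise. The coordinate projections $\pi_j : F \to F_j$ are surjective: given $u_1 \in F_1$, use surjectivity of $i_2^*$ to pick $u_2$ with $i_2^*(u_2) = i_1^*(u_1)$. So $F$ is a zigzag product in the sense of Marx, and the closure hypothesis yields $\D := F^+ \in K$. The amalgamating maps $m_j : \A_j \to \D$ are defined by $m_j(a) = \pi_j^{-1}(\sigma_j(a)) = \{(u_1, u_2) \in F : a \in u_j\}$, where $\sigma_j : \A_j \to F_j^+$ is the Stone embedding $a \mapsto \{u : a \in u\}$. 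Each $m_j$ is injective because $\sigma_j$ is and $\pi_j$ is surjective, and commutativity $m_1 \circ i_1 = m_2 \circ i_2$ follows at once from the defining condition $i_1^*(u_1) = i_2^*(u_2)$ on $F$.

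For the superamalgamation inequality, assume $m_1(x) \leq m_2(y)$ with $x \in A_1$, $y \in A_2$, and let $I := \{z \in A_0 : x \leq i_1(z)\}$, which is a filter of $\A_0$. Toward a contradiction, suppose no $z \in I$ satisfies $i_2(z) \leq y$. Then $i_2[I] \cup \{-y\}$ has the finite intersection property and extends to an ultrafilter $u_2$ of $\A_2$; set $v := i_2^*(u_2)$, so $v \supseteq I$. A symmetric FIP argument works on the other side: if $i_1(z) \cdot x = 0$ for some $z \in v$ then $-z \in I \subseteq v$, contradicting $z \in v$. Hence $i_1[v] \cup \{x\}$ extends to an ultrafilter $u_1$ of $\A_1$, necessarily with $i_1^*(u_1) = v = i_2^*(u_2)$. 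But then $(u_1, u_2) \in F$ with $x \in u_1$ and $y \notin u_2$, witnessing $m_1(x) \not\leq m_2(y)$, a contradiction.

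The main obstacle I foresee is ensuring that each $\pi_j : F \to F_j$ is in fact a bounded morphism (not merely a surjection), so that $\pi_j^{-1}$ honestly preserves the modal-type operators and $m_j$ is a $K$-morphism rather than only a Boolean morphism. The forth half is automatic since $F$ carries pointwise relations inside the product; the back half is precisely what is packaged into the hypothesis that $L$ is closed under zigzag products, since it is this closure that guarantees $F^+ \in K$ with the right operator structure to make $m_j$ a $K$-embedding. Once this subtlety is secured, the FIP argument above delivers the superamalgamation condition formally.
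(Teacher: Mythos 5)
The paper gives no argument here at all --- it simply cites Marx's thesis (Lemma 5.2.6) --- so your reconstruction is compared against the standard proof rather than an in-paper one. Your route is essentially that standard proof, and it is correct in all essentials: dualize the embeddings $i_{1},i_{2}$ to surjective bounded morphisms $i_{j}^{*}:F_{j}\to F_{0}$ between ultrafilter frames, use canonicity to place $F_{j}$ in $L$, form the pullback $F=\{(u_{1},u_{2}):i_{1}^{*}(u_{1})=i_{2}^{*}(u_{2})\}$ with coordinatewise relations, invoke closure of $L$ under finite zigzag products to get $\D=F^{+}\in K$, define $m_{j}=\pi_{j}^{-1}\circ\sigma_{j}$, and run the two ultrafilter/FIP extensions to get the superamalgamation inequality. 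That last argument (extend $i_{2}[I]\cup\{-y\}$, pull back to $v$, then extend $i_{1}[v]\cup\{x\}$) is exactly right, as are injectivity and commutativity of the $m_{j}$.

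The one genuine flaw is precisely the point you flag and then resolve incorrectly: the back (zigzag) condition for the projections $\pi_{j}$ is \emph{not} ``packaged into'' the hypothesis that $L$ is closed under zigzag products. That hypothesis only guarantees $F^{+}\in K$ once $F$ is known to be a zigzag product; it says nothing about $\pi_{j}$ being a bounded morphism, and without that, $\pi_{j}^{-1}$ need not commute with the operators, so $m_{j}$ would be merely a Boolean monomorphism. You must check the back condition directly, and it does hold because the $i_{j}^{*}$ are bounded morphisms: if $(u_{1},u_{2})\in F$ and $u_{1}\,R\,v_{1}$ in $F_{1}$ for one of the accessibility relations $R$, then $i_{1}^{*}(u_{1})\,R\,i_{1}^{*}(v_{1})$ in $F_{0}$ by the forth condition for $i_{1}^{*}$; since $i_{2}^{*}(u_{2})=i_{1}^{*}(u_{1})$, the back condition for $i_{2}^{*}$ produces $v_{2}$ with $u_{2}\,R\,v_{2}$ and $i_{2}^{*}(v_{2})=i_{1}^{*}(v_{1})$, so $(v_{1},v_{2})\in F$ is the required lift. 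This routine verification both makes each $\pi_{j}^{-1}$ an operator-preserving embedding of $F_{j}^{+}$ into $F^{+}$ and shows $F$ is a zigzag product in Marx's sense (the projections are onto by surjectivity of $i_{1}^{*},i_{2}^{*}$). With that inserted (and the trivial remark that $x=0$ is handled by the interpolant $z=0$, since then the filter $I$ is improper), your proof is complete.
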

\begin{proof}
See \cite[Lemma 5.2.6 p. 107.
]{marx}.
\end{proof}
\begin{thm}
The variety $TA_{\alpha}$ has $SUPAP$.
\end{thm}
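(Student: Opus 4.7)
The plan is to apply Marx's theorem stated above to $K=TA_{\alpha}$, which reduces $SUPAP$ for this class to two verifications: first, that $TA_{\alpha}$ is a canonical variety; second, that $L=Cm^{-1}(TA_{\alpha})$ is closed under finite zigzag products.

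For canonicity, I would put each axiom $(F_0)$--$(F_9)$ into Sahlqvist form. Most of them already are, after some mild rewriting: the operators $\c_i$, $\s^i_j$, $\s_{ij}$ are normal and additive by $(F_2)$ and $(F_6)$, so every term built from them on a single variable is positive in that variable. The only delicate axiom is $(F_6)$ itself, since ``$\s^i_j$ and $\s_{ij}$ are boolean endomorphisms'' apparently involves complement; but this can be split into additivity, normality, multiplicativity $\s(x)\cdot \s(y)\leq \s(x\cdot y)$, and preservation of $1$, each of which is Sahlqvist (conjunctions of operator-terms on the left, positive term on the right). By J\'onsson's theorem, Sahlqvist varieties are canonical, so $TA_{\alpha}$ is canonical.

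For closure of $L$ under finite zigzag products, the plan is to read off the first-order correspondent on frames for each axiom (this is algorithmic from the Sahlqvist shape) and check that each correspondent survives passage from a product $\prod_{i\in I}F_i$ to a substructure $F$ whose projections are surjective on the distinguished relations. Correspondents are universal-existential sentences, so given witnesses in each coordinate $F_i$, relation-surjectivity of the projections lets me assemble a common witness componentwise inside $F$; this is the standard preservation argument for zigzag products of Kripke frames.

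The main obstacle will be the bookkeeping for the interaction axioms $(F_7)$--$(F_9)$ between transpositions and substitutions, and for the restricted commutativity axiom $(F_5)^*$: here the correspondents involve several simultaneously interacting relations, and one has to be careful that the witnesses chosen in different coordinates can be glued so that all the relational atoms hold at the same point of the zigzag subframe. Once these case-by-case verifications are completed, Marx's theorem yields $SUPAP$ for $TA_{\alpha}$ immediately; the same recipe applies verbatim to $PTA_{\alpha}$, $TEA_{\alpha}$, and $SA_{\alpha}$.
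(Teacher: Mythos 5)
You follow the same skeleton as the paper: both proofs reduce the statement to Marx's theorem and then verify its two hypotheses for $TA_{\alpha}$, namely canonicity and closure of $Cm^{-1}TA_{\alpha}$ under finite zigzag products. The difference is in how the hypotheses are discharged, and here the paper's route is leaner. Its single key observation is that, once $(F_6)$ is rewritten without complementation (a map between Boolean algebras preserving $+$, $\cdot$, $0$, $1$ automatically preserves $-$, since complements are unique --- the same splitting you perform, and the same trick the paper uses again for $SA_{n}$), the whole axiomatization of $TA_{\alpha}$ consists of \emph{positive} equations. Positivity gives canonicity immediately, and the first-order frame correspondents of these positive equations are Horn formulas; Horn sentences are preserved in substructures of products, in particular in finite zigzag products, so the second hypothesis follows uniformly, with no case analysis. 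Your canonicity argument via Sahlqvist is perfectly legitimate (positive equations are Sahlqvist), but your zigzag-closure argument is the soft spot: the blanket claim that $\forall\exists$ correspondents survive passage to a relation-surjective substructure of a product is not a theorem in general, so the ``bookkeeping'' you defer for $(F_5)^*$ and $(F_7)$--$(F_9)$ is exactly where the content would lie. You can avoid it entirely by noting, as the paper does, that the relevant frames are of the concrete form $(\V,\C_i,\S_i^j,\S_{ij})$ with $(x,y)\in\S_i^j$ iff $y=[i|j]|x$, $(x,y)\in\S_{ij}$ iff $y=[i,j]|x$, and $(x,y)\in\C_i$ iff $x,y$ agree off $i$, so that the correspondents of the positive axioms are (universal) Horn conditions preserved under zigzag products; the transfer to $PTA_{\alpha}$, $TEA_{\alpha}$ and $SA_{\alpha}$ is then verbatim, as in your closing remark.
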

\begin{proof}  Since $TA_{\alpha}$ is defined by positive equations then it is canonical. 
In this case $L=Cm^{-1}TA_{\alpha}$ consists of frames $(\V, \C_i, \S_i^j, \S_{i,j})$
such that if $s\in \V$, then $[i,j]|s\in \V$ and $[i|j]|s$ is in $\V$. Moreover, $(x,y)\in \C_i$ iff $x$ and $y$ agree off $i$,
 $(x,y)\in \S_i^j$ iff $[i|j]|x= y$ and same for $\S_{ij}$.
The first order correspondants of the positive equations translated to the class of frames will be Horn formulas, hence clausifiable 
and so $L$ is closed under finite zigzag products. 
\end{proof}

\begin{athm}{ACKNOWLEDGEMENT} We are grateful to Mikl\'os Ferenczi, for his fruitful discussion.
\end{athm}

\end{document}